\documentclass[review]{elsarticle}
\usepackage[english]{babel}
\usepackage{amsmath}
\usepackage{amssymb}
\usepackage{fancyhdr} 
\usepackage{bussproofs}
\usepackage{mathrsfs}
\usepackage{leftidx}
\usepackage{amsthm}
\usepackage{amsmath}%
\usepackage{amsfonts}%
\usepackage{amssymb}%
\usepackage{tikz}
\usetikzlibrary{matrix,arrows}
\usetikzlibrary{arrows,%
                petri,%
               topaths}%

\usepackage{enumerate}



\newtheorem{theorem}{Theorem}[section]  
\newtheorem{corollary}[theorem]{Corollary}

\theoremstyle{definition}
\newtheorem{definition}[theorem]{Definition}
\newtheorem{example}{Example}[section]
\newtheorem{proposition}{Proposition}[section]

\newtheorem{property}{Property}[section]

\theoremstyle{definition} 

\newtheorem*{maintheorem*}{Main Theorem}










\bibliographystyle{elsarticle-num}

\begin{document}

\begin{frontmatter}

\title{Fuzzy $\alpha$-cut and related structures}



\author[mymainaddress]{Purbita Jana\corref{mycorrespondingauthor}}
\cortext[mycorrespondingauthor]{Corresponding author}
\ead{purbita\_presi@yahoo.co.in}

\author[mysecondaryaddress]{Mihir .K. Chakraborty}
\ead{mihirc4@gmail.com}
\fntext[myfootnote]{The corresponding author acknowledge Department of Science $\&$ Technology, Government of India for financial support vide reference no. SR/WOS-A/PM-1010/2014 under Women Scientist Scheme to carry out this work.}

\address[mymainaddress]{Department of Pure Mathematics, University of Calcutta}
\address[mysecondaryaddress]{School of Cognitive Science, Jadavpur University}

\begin{abstract}
This paper deals with a new notion called fuzzy $\alpha$-cut and its properties. A notion called localic frame is also introduced. Algebraic structures arising out of the family of fuzzy $\alpha$-cuts have been investigated. It will be seen that this family forms a localic frame. Some significance and usefulness of fuzzy $\alpha$-cuts are discussed.
\end{abstract}

\begin{keyword}
$L$-fuzzy set, $\alpha$-cut, Fuzzy $\alpha$-cut, Frame, Graded frame, G$\ddot{o}$del arrow.
\end{keyword}

\end{frontmatter}


\section{Introduction}
Though well known now-a-days, we would like to start with a little bit of history. Fuzzy set was first introduced and studied by Lotfi Zadeh \cite{LZ} in 1965, which can be considered in a sense a generalisation of ordinary set. It is well known that in informal set theory a (crisp) set $A$ is considered as a subset of a universal set $U$ and is fully determined by a function from $U$ to $\{0,1\}$ called the characteristic function of $A$ (denoted by $\chi_A$). Whereas a fuzzy set is a function from $U$ to $[0,1]$, and in this case the function is known as membership function. In 1967, J. Goguen \cite{GJ} generalised this notion one step further by considering the function from $U$ to $L$ (a complete lattice) and called it $L$-fuzzy set. Subsequently, there had been many other generalisations of the original proposal of Zadeh \cite{AK1, BW, CE}. In this paper we will consider $L$ as a frame (c.f. Definition \ref{fm}), $1_L$ and $0_L$ being the top and the bottom elements respectively.

 In 1971, Zadeh proposed a representation theorem of  fuzzy sets using the notion of $\alpha$-cuts (c.f. Definition \ref{1'} considering $L$ as [0,1]), known as first decomposition theorem \cite{KY} in the literature. $\alpha$-cuts of a fuzzy set are crisp sets. In this paper we delve into the notion of fuzzy $\alpha$-cut (c.f. Definition \ref{2'}), which was introduced by the present authors in \cite{MP2}. Recently we have noticed that the notion of fuzzy $\alpha$-cut exists in the literature as `level fuzzy sets' introduced in \cite{TR}. In this regard the authors are grateful to the editor of this journal for his valuable advise on an earlier version of this paper.  A fuzzy $\alpha$-cut of a fuzzy set gives a fuzzy subset of the given fuzzy set. We will see that fuzzy $\alpha$-cut of a fuzzy mathematical structure is a fuzzy mathematical substructure.



In this paper we have proved that a family of fuzzy $\alpha$-cuts over a frame  forms a localic frame. Consequently they generate a model of so called fuzzy geometric logic with graded consequence \cite{MP3}. The notion of graded frame and fuzzy geometric logic with graded consequence was introduced in \cite{MP3}. Such an algebraic structure and a logic were proposed to serve the purpose of giving an answer to the question-``From which logic fuzzy topology can be generated?" This question came up parallel to a similar idea provided in Vickers's book \cite{SV} ``Topology via logic", viz. from which logic classical topology can be generated?. It is to be noted that as an answer, fuzzy geometric logic was invented and as a further generalisation fuzzy geometric logic with graded consequence was introduced. Notion of graded frame came into the picture as Lindenbaum type algebra of fuzzy geometric logic with graded consequence \cite{MP3}. The notion of localic frame (c.f. Definition \ref{gfl}) is introduced here which is a further generalisation of graded frame by taking a frame-valued binary relation instead of $[0,1]$ -valued binary relation as $[0,1]$ is a particular frame. 

As for usefulness and significance of the notion of fuzzy $\alpha$-cuts, we claim that they give natural substructures of fuzzy topological spaces and fuzzy algebraic structures. Classical fuzzy topological spaces are defined by taking a crisp set and fuzzy open sets. There are two major streams of research, one following Chang's definition \cite{CL} and the other following Lowen's definition \cite{RL}. In both cases, subspaces are defined on crisp subsets of the base set. On the other hand in case of fuzzy algebraic structures (e.g. Rosenfeld \cite{AR}), one starts with a classical algebraic structure and defines fuzzy substructures. In neither construction topological or algebraic, the starting base set is taken to be fuzzy. While in the topological case, the base set as well as all the subspaces are to be taken crisp, in the algebraic case though the sub algebras are fuzzy, one has to begin with a classical crisp algebraic structure.

In \cite{MA} and \cite{MB} there had been proposals to develop both the kinds of structures on fuzzy sets. Besides, in \cite{MB} the algebraic compositions also have been fuzzy right from the start. Fuzzy $\alpha$-cuts being fuzzy subsets of a fuzzy set, using the above proposals it would be possible to define topological and algebraic substructures on them. These will be quite natural substructures. We shall present these constructions in section 4.

Secondly, fuzzy $\alpha$-cuts will provide fuzzy sets as lower and upper approximations in the probabilistic rough set framework \cite{YY}. We expects these kinds of approximation will be useful in the domain of application of rough set theory \cite{PW}. 

This paper is organised as follows. Section 2 emphasises upon the properties of fuzzy $\alpha$-cut. In section 3, various algebraic structures and a notion of G$\ddot{o}$del-like arrow along with its properties are discussed. Algebraic structures formed by the family of fuzzy $\alpha$-cuts are also studied in this section. The significance of fuzzy $\alpha$-cuts is provided in section 4. Section 5 presents some concluding remarks.

We give below some preliminary definitions that would be required in the sequel.
\begin{definition}[Frame]\label{fm}
A \textbf{frame} \index{frame} is a complete lattice such that, $$x\wedge\bigvee Y=\bigvee\{x\wedge y :y\in Y\}.$$
i.e., the binary meet distributes over arbitrary join.
\end{definition}
\begin{definition}[$\alpha$-cut of a fuzzy set]\label{1'}
Let $(X,\tilde{A})$ be an $L$-fuzzy set, where $X$ is the base set and $L$ is a frame. Then for $\alpha \in L$, the \textbf{$\alpha$-cut} of 
$(X,\tilde{A})$ is the ordinary set $\{ x\in X\mid \tilde{A}(x)\geq \alpha \}$ and is denoted by $\alpha_{\tilde{A}}$.
\end{definition} 

\begin{definition}[G$\ddot{o}$del arrow]\cite{KY}
G$\ddot{o}$del arrow is defined as follows:
\begin{center}
$a\rightarrow b$ $=\begin{cases}
        1 & \emph{if $a\leq b$} \\
        b & \emph{if $a>b$}.
    \end{cases}$
    \end{center}
for all $a,b\in[0,1]$.
\end{definition}
G$\ddot{o}$del arrow can be generalised to the following.
\begin{definition}[G$\ddot{o}$del-like arrow]\label{gga}
Let $L$ be any frame. Then the G$\ddot{o}$del-like arrow is defined as follows:\\
\begin{center}
$a\rightarrow b$ $=\begin{cases}
        1_L & \emph{if $a\leq b$} \\
        b & \emph{otherwise}.
    \end{cases}$
\end{center}
for all $a,b\in L$.
\end{definition}
There is another kind of implication in $L$ called residuated implication defined as below.
\begin{definition}[Residuated implication]\label{gha}
Let $L$ be any any frame. Then the residuated implication is defined by $a\rightarrow b=sup\{c\in L\mid c\wedge a\leq b\}$ for all $a,b\in L$. 
\end{definition}
The relationship between these two types of implications is discussed in subsection \ref{pga}.
\section{Fuzzy $\alpha$-cut and its properties}
In this section we will define the notion of fuzzy $\alpha$-cut and provide some of the algebraic properties of fuzzy $\alpha$-cut. For the corresponding classical notion we refer to \cite{KY}. 

\begin{definition}[Fuzzy $\alpha$-cut of a fuzzy set]\label{2'}\cite{MP2}
Let $(X,\tilde{A})$ be an $L$-fuzzy set. Then for $\alpha \in L$, the \textbf{fuzzy $\alpha$-cut} of 
$(X,\tilde{A})$ is the fuzzy subset $(X,\leftidx{^{\tilde{A}}}{\alpha}{})$ where $\leftidx{^{\tilde{A}}}{\alpha}{}$ is defined as follows:
\begin{center}
$\leftidx{^{\tilde{A}}}{\alpha}{}(x)$ $=\begin{cases}
        \tilde{A}(x) & \emph{if $\tilde{A}(x)\geq \alpha $} \\
        0_L & \emph{otherwise}.\end{cases}$
\end{center}  
We will denote fuzzy $\alpha$-cut of an $L$-fuzzy set $(X,\tilde{A})$ simply by $\leftidx{^{\tilde{A}}}{\alpha}{}$ if the base set $X$ is understood.   
\end{definition} 
It is to be noted that fuzzy $\alpha$-cut of a fuzzy set is also known as level fuzzy set \cite{TR}. As mentioned in the introduction, present authors were not aware of this paper published long back in 1977 and not used frequently in subsequent literature. In the paper \cite{TR} the author defined the algebraic operations viz. intersection, union, complementation of level sets as is done in fuzzy set theory by min, max and $1-(\cdot)$, in the value set $[0,1]$ and established certain elementary properties. In our paper, however, these operations are presumed since these are none else than the corresponding operations of fuzzy subsets. We have rather proved some non-trivial results in this section where the value set $L$ is taken to be a frame. 
\begin{example}
Consider the fuzzy set $\tilde {A}$ defined on the interval $X=[0,10]$ of real numbers by the membership function $\tilde{A}(x)=\frac{x}{x+2}$. Then 
\begin{center}
$\leftidx{^{\tilde{A}}}{0.2}{}(x)$ $=\begin{cases}
        \frac{x}{x+2} & \emph{for $x\in [0.5,10] $} \\
        0 & \emph{otherwise}.\end{cases}$
\end{center} 
\end{example}
Let $(X,\tilde{A})$, $(X,\tilde{B})$ be two $L$-fuzzy sets. Then $\tilde{A}\subseteq \tilde{B}$ if and only if $\tilde{A}(x)\leq \tilde{B}(x)$, for any $x\in X$.

 As we are dealing with $L$-fuzzy sets, we are considering the generalised version of the definition of $\alpha$-cut and fuzzy $\alpha$-cut by generalising the value set $[0,1]$ to a frame $L$. 

\begin{theorem} Let $(X,\tilde{A})$, $(X,\tilde{B})$ be two fuzzy sets. Then for any $\alpha$, $\alpha_1$, $\alpha_2\in L$ the following properties hold:
1. $\leftidx{^{\tilde{A}}}{\alpha}{}\subseteq \chi_{\alpha_{\tilde{A}}}$; 2. $\alpha_1\leq \alpha_2$ implies $\leftidx{^{\tilde{A}}}{\alpha_1}{}\supseteq \leftidx{^{\tilde{A}}}{\alpha_2}{}$; \ \ 3.  $\leftidx{^{(\tilde{A}\cap\tilde{B})}}{\alpha}{}=\leftidx{^{\tilde{A}}}{\alpha}{}\cap \leftidx{^{\tilde{B}}}{\alpha}{}$ and $\leftidx{^{(\tilde{A}\cup\tilde{B})}}{\alpha}{}=\leftidx{^{\tilde{A}}}{\alpha}{}\cup \leftidx{^{\tilde{B}}}{\alpha}{}$.
\end{theorem}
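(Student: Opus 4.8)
The plan is to verify each of the three properties directly from Definition~\ref{2'}, by pointwise comparison of membership values at an arbitrary $x\in X$; no deep structural facts about the frame are needed beyond the order relation, so the work is essentially a careful case analysis.

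For part~1, fix $x\in X$. If $\tilde{A}(x)\geq\alpha$, then $x\in\alpha_{\tilde{A}}$ so $\chi_{\alpha_{\tilde{A}}}(x)=1_L$, while $\leftidx{^{\tilde{A}}}{\alpha}{}(x)=\tilde{A}(x)\leq 1_L$; if $\tilde{A}(x)\not\geq\alpha$, then $\leftidx{^{\tilde{A}}}{\alpha}{}(x)=0_L\leq\chi_{\alpha_{\tilde{A}}}(x)$. Either way $\leftidx{^{\tilde{A}}}{\alpha}{}(x)\leq\chi_{\alpha_{\tilde{A}}}(x)$, which is exactly $\leftidx{^{\tilde{A}}}{\alpha}{}\subseteq\chi_{\alpha_{\tilde{A}}}$. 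For part~2, assume $\alpha_1\leq\alpha_2$ and fix $x$. The only interesting case is $\tilde{A}(x)\geq\alpha_2$: then also $\tilde{A}(x)\geq\alpha_1$, so both cuts take value $\tilde{A}(x)$ at $x$. If $\tilde{A}(x)\not\geq\alpha_2$ then $\leftidx{^{\tilde{A}}}{\alpha_2}{}(x)=0_L\leq\leftidx{^{\tilde{A}}}{\alpha_1}{}(x)$. Hence $\leftidx{^{\tilde{A}}}{\alpha_2}{}(x)\leq\leftidx{^{\tilde{A}}}{\alpha_1}{}(x)$ for all $x$, i.e. $\leftidx{^{\tilde{A}}}{\alpha_1}{}\supseteq\leftidx{^{\tilde{A}}}{\alpha_2}{}$.

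For part~3, recall that $(\tilde{A}\cap\tilde{B})(x)=\tilde{A}(x)\wedge\tilde{B}(x)$ and $(\tilde{A}\cup\tilde{B})(x)=\tilde{A}(x)\vee\tilde{B}(x)$. Fix $x$ and split on whether $\tilde{A}(x)\geq\alpha$ and whether $\tilde{B}(x)\geq\alpha$. For the intersection identity: if both hold, then $\tilde{A}(x)\wedge\tilde{B}(x)\geq\alpha$, so the left side is $\tilde{A}(x)\wedge\tilde{B}(x)$, which equals $\leftidx{^{\tilde{A}}}{\alpha}{}(x)\wedge\leftidx{^{\tilde{B}}}{\alpha}{}(x)$; if (say) $\tilde{A}(x)\not\geq\alpha$, then $\tilde{A}(x)\wedge\tilde{B}(x)\not\geq\alpha$ as well, so the left side is $0_L$, and on the right $\leftidx{^{\tilde{A}}}{\alpha}{}(x)=0_L$ makes the meet $0_L$. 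For the union identity the bookkeeping is similar but one must be slightly careful in the mixed case $\tilde{A}(x)\geq\alpha$, $\tilde{B}(x)\not\geq\alpha$: then $\tilde{A}(x)\vee\tilde{B}(x)\geq\alpha$, so the left side is $\tilde{A}(x)\vee\tilde{B}(x)$, while the right side is $\tilde{A}(x)\vee 0_L=\tilde{A}(x)$; these agree precisely because $\tilde{B}(x)\leq\alpha$ fails to exceed... here one uses that $\tilde{B}(x)\vee\tilde{A}(x)=\tilde{A}(x)$ need \emph{not} hold in a general lattice, so instead one argues that since $\tilde{A}(x)\geq\alpha>\!\!\!\!\!\!/\ \tilde B(x)$ does not force $\tilde B(x)\le\tilde A(x)$; the correct observation is that the mixed case actually requires a join that may be strictly larger than $\tilde A(x)$, so one should instead check the identity by noting $\leftidx{^{(\tilde A\cup\tilde B)}}{\alpha}{}(x)=\tilde A(x)\vee\tilde B(x)$ exactly when $\tilde A(x)\vee\tilde B(x)\ge\alpha$, and that $\leftidx{^{\tilde A}}{\alpha}{}(x)\vee\leftidx{^{\tilde B}}{\alpha}{}(x)$ equals $\tilde A(x)\vee\tilde B(x)$ in every such subcase after expanding the two cut-definitions.

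I expect the main obstacle to be exactly this last point: the union identity in part~3 is the only place where the non-totality of the frame order can bite, since in the mixed case the value $\tilde{A}(x)\vee\tilde{B}(x)$ must be shown equal to $\tilde{A}(x)\vee 0_L$, and this is \emph{false} for a general frame unless $\tilde{B}(x)\le\tilde A(x)$ --- so one must show that whenever $\tilde A(x)\vee\tilde B(x)\ge\alpha$ but $\tilde B(x)\not\ge\alpha$, we nonetheless have $\leftidx{^{\tilde B}}{\alpha}{}(x)$ contributing correctly, which forces a finer case split (four cases on the pair of truth values of $\tilde A(x)\ge\alpha$, $\tilde B(x)\ge\alpha$, plus tracking whether the join meets $\alpha$). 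Parts~1 and~2, and the intersection half of part~3, are routine once the pointwise reformulation is set up. I would therefore organise the proof as: (i) reduce everything to pointwise inequalities/equalities; (ii) dispatch 1 and 2 in two short paragraphs; (iii) do the intersection identity; (iv) spend the bulk of the argument on the exhaustive case analysis for the union identity.
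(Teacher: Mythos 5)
Your treatment of parts 1 and 2 and of the intersection identity in part 3 is correct and is essentially the paper's own argument: the paper writes out only the intersection computation, resting on the same equivalence $\tilde{A}(x)\wedge\tilde{B}(x)\geq\alpha$ iff $\tilde{A}(x)\geq\alpha$ and $\tilde{B}(x)\geq\alpha$, and disposes of everything else in one line.

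The union half of part 3 is where your proposal breaks down, and you half-see why but do not close the case analysis. You correctly observe that in the mixed case $\tilde{A}(x)\geq\alpha$, $\tilde{B}(x)\not\geq\alpha$ one would need $\tilde{A}(x)\vee\tilde{B}(x)=\tilde{A}(x)\vee 0_L$, which fails unless $\tilde{B}(x)\leq\tilde{A}(x)$; but you then retreat to ``check the identity \ldots\ in every such subcase after expanding the two cut-definitions,'' which is not an argument. In fact no argument exists, because the identity is false for a general frame $L$. Take $L=\{0_L,a,b,1_L\}$ with $a,b$ incomparable, $X=\{x\}$, $\tilde{A}(x)=a$, $\tilde{B}(x)=b$ and $\alpha=1_L$: then $(\tilde{A}\cup\tilde{B})(x)=a\vee b=1_L\geq\alpha$, so $\leftidx{^{(\tilde{A}\cup\tilde{B})}}{\alpha}{}(x)=1_L$, whereas $\leftidx{^{\tilde{A}}}{\alpha}{}(x)=\leftidx{^{\tilde{B}}}{\alpha}{}(x)=0_L$ and the right-hand side vanishes; the same lattice with $\alpha=a$ kills your mixed case as well. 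Only the inclusion $\leftidx{^{\tilde{A}}}{\alpha}{}\cup\leftidx{^{\tilde{B}}}{\alpha}{}\subseteq\leftidx{^{(\tilde{A}\cup\tilde{B})}}{\alpha}{}$ holds unconditionally; equality needs $L$ totally ordered (e.g.\ $L=[0,1]$), in which case $\tilde{B}(x)<\alpha\leq\tilde{A}(x)$ does force $\tilde{B}(x)\leq\tilde{A}(x)$ and your four-way split goes through. To be fair, the paper itself glosses over this with ``Similarly the second equality holds,'' so the gap is in the source as much as in your proposal; a complete write-up must either assume $L$ is a chain for the union identity or weaken it to the one-sided inclusion.
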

\begin{proof} We demonstrate the proof of 3 only. For the first part of 3 we proceed as follows:

\begin{align*}
\leftidx{^{(\tilde{A}\cap\tilde{B})}}{\alpha}{}(x) & =\begin{cases}
        (\tilde{A}\cap\tilde{B})(x) & \emph{if $(\tilde{A}\cap\tilde{B})(x)\geq \alpha $} \\
        0_L & \emph{otherwise}.\end{cases}\\ 
        & =\begin{cases}
        \tilde{A}(x)\wedge\tilde{B}(x) & \emph{if $\tilde{A}(x)\wedge\tilde{B}(x)\geq \alpha $} \\
        0_L & \emph{otherwise}.\end{cases}\\
        & =\begin{cases}
        \tilde{A}(x)\wedge\tilde{B}(x) & \emph{if $\tilde{A}(x)\geq \alpha$ and $\tilde{B}(x)\geq \alpha $} \\
        0_L & \emph{otherwise}.\end{cases}\\
        & =\begin{cases}
        \tilde{A}(x)\wedge\tilde{B}(x) & \emph{if $\tilde{A}(x)\geq \alpha$ and $\tilde{B}(x)\geq \alpha $} \\
        0_L & \emph{if $\tilde{A}(x)\geq \alpha$ and $\tilde{B}(x)< \alpha $} \\
        0_L & \emph{if $\tilde{A}(x)< \alpha$ and $\tilde{B}(x)\geq \alpha $} \\
        0_L & \emph{if $\tilde{A}(x)< \alpha$ and $\tilde{B}(x)< \alpha $}.\end{cases}\\
        & =\begin{cases}
        \tilde{A}(x)\wedge\tilde{B}(x) & \emph{if $\tilde{A}(x)\geq \alpha$ and $\tilde{B}(x)\geq \alpha $} \\
        \tilde{A}(x)\wedge 0_L & \emph{if $\tilde{A}(x)\geq \alpha$ and $\tilde{B}(x)< \alpha $} \\
        0_L\wedge\tilde{B}(x) & \emph{if $\tilde{A}(x)< \alpha$ and $\tilde{B}(x)\geq \alpha $} \\
        0_L & \emph{if $\tilde{A}(x)< \alpha$ and $\tilde{B}(x)< \alpha $}.\end{cases}\\
 & = \leftidx{^{\tilde{A}}}{\alpha}{}(x)\wedge\leftidx{^{\tilde{B}}}{\alpha}{}(x)
       =(\leftidx{^{\tilde{A}}}{\alpha}{}\cap\leftidx{^{\tilde{B}}}{\alpha}{})(x).       
\end{align*}  
Similarly the second equality holds.
\end{proof}
For any mapping $f:X\longrightarrow Y$, the image of the fuzzy subset $(X,\tilde{A})$ of $X$ is the fuzzy subset $(Y,f(\tilde{A}))$ and defined by \cite{KY}
$$f(\tilde{A})(y)=\bigvee_{x\in X}\{\tilde{A}(x)\mid y=f(x)\}.$$
Thus $f(\leftidx{^{\tilde{A}}}{\alpha}{})$ gives the fuzzy subset $(Y,\leftidx{^{\tilde{A}}}{\alpha}{})$ of $Y$. We now have the following theorem.
 \begin{theorem}
 Let $f:X\longrightarrow Y$ be a mapping. Then for any $L$-fuzzy set $(X,\tilde{A})$ and $\alpha\in L$, $f(\leftidx{^{\tilde{A}}}{\alpha}{})= \leftidx{^{(f(\tilde{A}))}}{\alpha}{}$.
 \end{theorem}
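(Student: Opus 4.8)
The plan is to prove the identity pointwise on $Y$: for each fixed $y\in Y$ it suffices to show that $f(\leftidx{^{\tilde{A}}}{\alpha}{})(y)$ and $\leftidx{^{(f(\tilde{A}))}}{\alpha}{}(y)$ coincide as elements of $L$. First I would unfold the left-hand side using the image formula together with Definition~\ref{2'}: writing $F_y=\{x\in X\mid f(x)=y\}$ for the fibre over $y$, the summands of $\bigvee_{x\in F_y}\leftidx{^{\tilde{A}}}{\alpha}{}(x)$ coming from those $x$ with $\tilde{A}(x)\not\geq\alpha$ contribute only $0_L$ and may be discarded, giving
$$f(\leftidx{^{\tilde{A}}}{\alpha}{})(y)=\bigvee\{\tilde{A}(x)\mid x\in F_y,\ \tilde{A}(x)\geq\alpha\}.$$
On the right-hand side, by the definition of the fuzzy $\alpha$-cut, $\leftidx{^{(f(\tilde{A}))}}{\alpha}{}(y)$ equals $f(\tilde{A})(y)=\bigvee\{\tilde{A}(x)\mid x\in F_y\}$ when this element is $\geq\alpha$, and $0_L$ otherwise.

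Next I would record the inequality that comes for free: since $\leftidx{^{\tilde{A}}}{\alpha}{}(x)\leq\tilde{A}(x)$ for every $x$ (immediate from Definition~\ref{2'}, where this value is either $\tilde{A}(x)$ or $0_L$), taking joins over $F_y$ yields $f(\leftidx{^{\tilde{A}}}{\alpha}{})(y)\leq f(\tilde{A})(y)$. Then split according to whether $f(\tilde{A})(y)\geq\alpha$. If $f(\tilde{A})(y)\not\geq\alpha$, the right-hand side is $0_L$, and any $x\in F_y$ with $\tilde{A}(x)\geq\alpha$ would force $f(\tilde{A})(y)\geq\tilde{A}(x)\geq\alpha$, a contradiction; hence every $x\in F_y$ has $\leftidx{^{\tilde{A}}}{\alpha}{}(x)=0_L$ and the left-hand side is $0_L$ too. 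If $f(\tilde{A})(y)\geq\alpha$, the right-hand side is $f(\tilde{A})(y)$ and, by the inequality already noted, everything reduces to proving $f(\tilde{A})(y)\leq\bigvee\{\tilde{A}(x)\mid x\in F_y,\ \tilde{A}(x)\geq\alpha\}$.

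This last inequality is where I expect the real work --- and possibly a genuine obstacle --- to sit: it asserts that the supremum defining $f(\tilde{A})(y)$ is already carried by those preimage points whose $\tilde{A}$-value reaches $\alpha$. It is immediate whenever that supremum is attained at some $x_0\in F_y$ (then $\tilde{A}(x_0)=f(\tilde{A})(y)\geq\alpha$, so $x_0$ appears in the restricted join), hence in particular when $f$ is injective. In a general frame, however, a join can exceed $\alpha$ with no single summand reaching $\alpha$, so in full generality the step would seem to require an extra hypothesis --- attained suprema, $L$ a chain in which this sup is attained, or similar --- or a more delicate argument using the frame distributive law; the remaining steps above are a pure unwinding of definitions.
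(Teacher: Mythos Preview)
Your pointwise case-split is exactly the paper's approach, and your suspicion about the final inequality is well founded: the paper asserts precisely this step without justification, and in fact the theorem is false for a general frame $L$. A finite counterexample: take $L$ to be the four-element Boolean algebra $\{0_L,a,b,1_L\}$ with $a,b$ incomparable, $X=\{x_1,x_2\}$, $Y=\{y\}$, $f$ the constant map, $\tilde{A}(x_1)=a$, $\tilde{A}(x_2)=b$, and $\alpha=1_L$. Neither $\tilde{A}(x_i)\geq\alpha$, so $\leftidx{^{\tilde{A}}}{\alpha}{}\equiv 0_L$ and $f(\leftidx{^{\tilde{A}}}{\alpha}{})(y)=0_L$; but $f(\tilde{A})(y)=a\vee b=1_L\geq\alpha$, so $\leftidx{^{(f(\tilde{A}))}}{\alpha}{}(y)=1_L$. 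Even for the chain $L=[0,1]$ the statement fails when the fibre supremum is not attained: with $X=\mathbb{N}$, $f$ constant onto $\{y\}$, $\tilde{A}(n)=\tfrac{1}{2}-\tfrac{1}{n+2}$ and $\alpha=\tfrac{1}{2}$, one obtains $f(\leftidx{^{\tilde{A}}}{\alpha}{})(y)=0\neq\tfrac{1}{2}=\leftidx{^{(f(\tilde{A}))}}{\alpha}{}(y)$.

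What your two easy cases do establish is the inclusion $f(\leftidx{^{\tilde{A}}}{\alpha}{})\subseteq\leftidx{^{(f(\tilde{A}))}}{\alpha}{}$, which holds in every frame. The reverse inclusion genuinely requires an extra hypothesis of the sort you name (e.g.\ the supremum $f(\tilde{A})(y)$ is attained at some point of the fibre, or $L$ is a chain and the fibres are finite), and the paper's chain of displayed equalities simply elides this point.
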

 \begin{proof}
 For any $y\in Y$, we have the following:
 \begin{align*}
 f(\leftidx{^{\tilde{A}}}{\alpha}{})(y)  & =\bigvee_x\{\leftidx{^{\tilde{A}}}{\alpha}{}(x)\mid y=f(x)\} \\
& =\begin{cases}
        0_L & \emph{if $\tilde{A}(x)< \alpha $ for all $x\in X$} \\
        \bigvee_x\{\tilde{A}(x)\mid y=f(x)\} & \emph{otherwise}.\end{cases}\\
        & =\begin{cases}
        \bigvee_x\{\tilde{A}(x)\mid y=f(x)\} & \emph{if $\bigvee_x\{\tilde{A}(x)\mid y=f(x)\}\geq \alpha $} \\
        0_L & \emph{otherwise}.\end{cases}\\
         & =\begin{cases}
       (f(\tilde{A}))(y) & \emph{if $(f(\tilde{A}))(y)\geq \alpha $} \\
        0_L & \emph{otherwise}.\end{cases}\\
        & =\leftidx{^{(f(\tilde{A}))}}{\alpha}{}(y).
 \end{align*}
 Hence $f(\leftidx{^{\tilde{A}}}{\alpha}{})= \leftidx{^{(f(\tilde{A}))}}{\alpha}{}$.
 \end{proof}
 \begin{proposition}
 Let $f:X\longrightarrow Y$ be a mapping. Then for any $L$-fuzzy set $(X,\tilde{A})$ and $\alpha, \beta\in L$, $\alpha\leq \beta\Rightarrow f(\leftidx{^{\tilde{A}}}{\alpha}{})\subseteq f(\leftidx{^{\tilde{A}}}{\beta}{})$.
 \end{proposition}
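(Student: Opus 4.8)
The plan is to peel off the image operator and reduce the assertion to a pointwise comparison of the two cuts. First I would record that the image operator $\tilde{C}\mapsto f(\tilde{C})$, given by $f(\tilde{C})(y)=\bigvee_{x}\{\tilde{C}(x)\mid y=f(x)\}$, is monotone: if $\tilde{C}(x)\leq\tilde{D}(x)$ for every $x$, then taking joins over each fixed fibre $\{x\mid y=f(x)\}$ gives $f(\tilde{C})(y)\leq f(\tilde{D})(y)$ for every $y$, i.e. $\tilde{C}\subseteq\tilde{D}$ implies $f(\tilde{C})\subseteq f(\tilde{D})$. Granting this, it suffices to establish the inclusion one level down, between the cuts themselves, namely $\leftidx{^{\tilde{A}}}{\alpha}{}(x)\leq\leftidx{^{\tilde{A}}}{\beta}{}(x)$ for every $x\in X$, and then transport that comparison through $f$. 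So the whole statement rests on comparing the two membership functions $\leftidx{^{\tilde{A}}}{\alpha}{}$ and $\leftidx{^{\tilde{A}}}{\beta}{}$ for $\alpha\leq\beta$.

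Next I would fix $x\in X$ and split on the position of $\tilde{A}(x)$ relative to the thresholds. If $\tilde{A}(x)\geq\beta$, then $\alpha\leq\beta$ forces $\tilde{A}(x)\geq\alpha$ as well, so both cuts return $\tilde{A}(x)$ and the two values coincide. If $\tilde{A}(x)\not\geq\alpha$, then both cuts return $0_L$ and again coincide. The remaining, decisive case is the ``middle band'' where $\tilde{A}(x)\geq\alpha$ but $\tilde{A}(x)\not\geq\beta$: here, directly from Definition~\ref{2'}, $\leftidx{^{\tilde{A}}}{\alpha}{}(x)=\tilde{A}(x)$ while $\leftidx{^{\tilde{A}}}{\beta}{}(x)=0_L$. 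Once the middle band is settled the second step is purely formal, feeding the cut-level comparison into the image monotonicity recorded above.

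The hard part, and indeed the crux of the statement, is exactly this middle band, since it is where the orientation of the inclusion is decided. To obtain the claimed $\leftidx{^{\tilde{A}}}{\alpha}{}\subseteq\leftidx{^{\tilde{A}}}{\beta}{}$ one would need $\tilde{A}(x)\leq 0_L$ throughout the band, which fails whenever $\tilde{A}(x)\neq 0_L$; what the band actually delivers is the reverse comparison $\leftidx{^{\tilde{A}}}{\beta}{}(x)\leq\leftidx{^{\tilde{A}}}{\alpha}{}(x)$, i.e. $\leftidx{^{\tilde{A}}}{\alpha}{}\supseteq\leftidx{^{\tilde{A}}}{\beta}{}$, in agreement with the threshold-monotonicity already recorded as part 2 of the opening theorem of this section. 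Because $f(\cdot)$ is monotone and not antitone, this orientation is preserved rather than flipped, so the image comparison it produces is $f(\leftidx{^{\tilde{A}}}{\alpha}{})\supseteq f(\leftidx{^{\tilde{A}}}{\beta}{})$. I therefore expect the symbol in the statement to be a transcription of $\supseteq$ (the value of $\tilde{A}$ is kept by the looser $\alpha$-cut precisely where it is discarded by the tighter $\beta$-cut, so the larger cut has the larger image); under that reading the result is immediate from the two-step plan, whereas the literal $\subseteq$ is refuted by any point in the middle band — for instance, with $\alpha=0.2<\beta=0.3$ and $f$ the identity, the point $x=0.6$ gives $\tilde{A}(0.6)=0.6/2.6\in[0.2,0.3)$.
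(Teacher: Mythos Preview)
Your analysis is correct, including the diagnosis of the orientation error. The paper's own proof is the one-line chain
\[
\alpha\leq\beta\ \Rightarrow\ \leftidx{^{\tilde{A}}}{\alpha}{}\subseteq \leftidx{^{\tilde{A}}}{\beta}{}\ \Rightarrow\ f(\leftidx{^{\tilde{A}}}{\alpha}{})\subseteq f(\leftidx{^{\tilde{A}}}{\beta}{}),
\]
which is exactly your two-step plan (cut-level comparison, then image monotonicity). However, the first implication in that chain is the reverse of what the paper itself records as part~2 of Theorem~2.1, namely $\alpha_1\leq\alpha_2\Rightarrow \leftidx{^{\tilde{A}}}{\alpha_1}{}\supseteq\leftidx{^{\tilde{A}}}{\alpha_2}{}$. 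Your middle-band argument and the concrete counterexample drawn from the paper's own Example~2.1 confirm that the $\supseteq$ direction is the true one, so the proposition and its proof in the paper carry a sign error: the conclusion should read $f(\leftidx{^{\tilde{A}}}{\alpha}{})\supseteq f(\leftidx{^{\tilde{A}}}{\beta}{})$. Under that corrected reading your argument and the paper's coincide.
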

 \begin{proposition}
 Let $f:X\longrightarrow Y$ and $g:Y\longrightarrow Z$. Then for any $L$-fuzzy set $(X,\tilde{A})$ and $\alpha\in L$, $(g\circ f)(\leftidx{^{\tilde{A}}}{\alpha}{})=\leftidx{^{(g\circ f(\tilde{A}))}}{\alpha}{}$.
 \end{proposition}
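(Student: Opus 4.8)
The plan is to reduce the claim about the composite $g\circ f$ to the already-established functoriality of fuzzy $\alpha$-cut under a single mapping. The key observation is that the previous theorem, $f(\leftidx{^{\tilde{A}}}{\alpha}{})=\leftidx{^{(f(\tilde{A}))}}{\alpha}{}$, holds for \emph{any} mapping and any $L$-fuzzy set; so I intend to apply it twice, once to $f$ and once to $g$, after recalling that the image of fuzzy sets is functorial, i.e. $(g\circ f)(\tilde{A})=g(f(\tilde{A}))$.

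First I would verify (or simply cite, since it is the standard Zadeh extension principle and is essentially immediate from the definition $f(\tilde{A})(y)=\bigvee_x\{\tilde{A}(x)\mid y=f(x)\}$) that images of $L$-fuzzy sets compose: for any $z\in Z$,
\begin{align*}
(g\circ f)(\tilde{A})(z) &= \bigvee_{x\in X}\{\tilde{A}(x)\mid z=g(f(x))\} \\
&= \bigvee_{y\in Y}\Bigl\{\bigvee_{x\in X}\{\tilde{A}(x)\mid y=f(x)\}\;\Big|\; z=g(y)\Bigr\} \\
&= g(f(\tilde{A}))(z),
\end{align*}
where the middle step regroups the join over $x$ according to the intermediate value $y=f(x)$ and uses that a join over a disjoint union of index sets equals the iterated join; this is where the frame structure (existence of arbitrary joins) is used, though only the join part, not distributivity.

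Then the main argument is a short chain of equalities. Applying the previous theorem to the mapping $g\circ f$ and the fuzzy set $\tilde{A}$ gives $(g\circ f)(\leftidx{^{\tilde{A}}}{\alpha}{})=\leftidx{^{((g\circ f)(\tilde{A}))}}{\alpha}{}$, and then the functoriality of images just established rewrites $(g\circ f)(\tilde{A})$ as $g(f(\tilde{A}))$, so that $(g\circ f)(\leftidx{^{\tilde{A}}}{\alpha}{})=\leftidx{^{(g(f(\tilde{A})))}}{\alpha}{}$, which is exactly the asserted $\leftidx{^{(g\circ f(\tilde{A}))}}{\alpha}{}$. Alternatively one can route through $g(f(\leftidx{^{\tilde{A}}}{\alpha}{}))$ by applying the theorem to $f$ first and then to $g$, but one must then take care that the fuzzy set fed into the second application is genuinely of the form $\leftidx{^{\tilde{B}}}{\alpha}{}$ for $\tilde{B}=f(\tilde{A})$, which is precisely what the first application supplies.

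I expect the only subtle point — hardly an obstacle — to be the bookkeeping in the interchange-of-joins step for the composition of images: one should be explicit that every $x$ with $g(f(x))=z$ contributes to exactly one inner join (the one indexed by $y=f(x)$), so no terms are double-counted or dropped. Everything else is a direct invocation of the preceding theorem, so the proof will be only a few lines.
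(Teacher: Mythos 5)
Your proposal is correct, and the alternative route you mention at the end --- applying the single-map theorem to $f$ and then to $g$ via $(g\circ f)(\leftidx{^{\tilde{A}}}{\alpha}{})=g(f(\leftidx{^{\tilde{A}}}{\alpha}{}))$ --- is exactly the paper's one-line chain of equalities, so your primary route (one application of the theorem to the composite map plus explicit functoriality of images) is only a cosmetic rearrangement of the same argument. Your explicit verification that $(g\circ f)(\tilde{A})=g(f(\tilde{A}))$ is a welcome addition, since the paper uses this silently at both ends of its chain.
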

 \begin{proof}
 $(g\circ f)(\leftidx{^{\tilde{A}}}{\alpha}{})=g(f(\leftidx{^{\tilde{A}}}{\alpha}{}))=g(\leftidx{^{(f(\tilde{A}))}}{\alpha}{})=\leftidx{^{(g(f(\tilde{A})))}}{\alpha}{}=\leftidx{^{(g\circ f(\tilde{A}))}}{\alpha}{}$.
\end{proof}
$(Z,(g\circ f)\leftidx{^{\tilde{A}}}{\alpha}{})$ is a fuzzy subset of $Z$.
\begin{proposition}
 Let $f:X\longrightarrow Y$, $g:Y\longrightarrow Z$ and $h:Z\longrightarrow W$. Then for any $L$-fuzzy set $(X,\tilde{A})$ and $\alpha\in L$, $(h\circ(g\circ f))(\leftidx{^{\tilde{A}}}{\alpha}{})=((h\circ g)\circ f)(\leftidx{^{\tilde{A}}}{\alpha}{})$.
 \end{proposition}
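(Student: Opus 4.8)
The plan is to reduce the claim to two facts: the associativity of ordinary function composition, and the functoriality of the fuzzy-image operation, i.e. that $(g\circ f)(\tilde C)=g(f(\tilde C))$ for every $L$-fuzzy set $(X,\tilde C)$. The latter is exactly the content of the preceding proposition in the special case $\tilde C=\leftidx{^{\tilde A}}{\alpha}{}$ (indeed that proposition was stated for $\tilde A$, but its proof — which only chains the earlier theorem $f(\leftidx{^{\tilde A}}{\alpha}{})=\leftidx{^{(f(\tilde A))}}{\alpha}{}$ — goes through verbatim for any fuzzy subset, since a fuzzy $\alpha$-cut is again just a fuzzy subset).

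First I would record, or simply reuse, the functoriality identity. Unfolding the definition of the image, for $z\in Z$ one has $(g\circ f)(\tilde C)(z)=\bigvee_{x\in X}\{\tilde C(x)\mid z=g(f(x))\}$, whereas $g(f(\tilde C))(z)=\bigvee_{y\in Y}\bigl\{\, \bigvee_{x\in X}\{\tilde C(x)\mid y=f(x)\}\ \bigm|\ z=g(y)\,\bigr\}$. Since $z=g(f(x))$ holds precisely when there is some $y\in Y$ with $y=f(x)$ and $z=g(y)$, and since arbitrary joins in the complete lattice $L$ are associative, the iterated join equals the single join, so $(g\circ f)(\tilde C)=g(f(\tilde C))$.

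Then I would apply this identity twice to the fuzzy $\alpha$-cut $\leftidx{^{\tilde A}}{\alpha}{}$, which is an $L$-fuzzy set on $X$. On one side, $(h\circ(g\circ f))(\leftidx{^{\tilde A}}{\alpha}{})=h\bigl((g\circ f)(\leftidx{^{\tilde A}}{\alpha}{})\bigr)=h\bigl(g(f(\leftidx{^{\tilde A}}{\alpha}{}))\bigr)$; on the other side, $((h\circ g)\circ f)(\leftidx{^{\tilde A}}{\alpha}{})=(h\circ g)\bigl(f(\leftidx{^{\tilde A}}{\alpha}{})\bigr)=h\bigl(g(f(\leftidx{^{\tilde A}}{\alpha}{}))\bigr)$. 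The two right-hand expressions are literally the same fuzzy subset of $W$, hence so are the two left-hand sides, which is the assertion. (Alternatively, one can bypass the image operation entirely: $h\circ(g\circ f)$ and $(h\circ g)\circ f$ are the \emph{same} map $X\to W$, so applying each to $\leftidx{^{\tilde A}}{\alpha}{}$ trivially yields the same fuzzy subset.)

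I do not expect a genuine obstacle here. The only step needing even a one-line justification is the collapse of the iterated supremum in the functoriality identity, and this uses nothing beyond the completeness of $L$ (associativity of joins); frame distributivity is not required. Everything else is formal rewriting driven by the associativity of composition of the underlying maps $f$, $g$, $h$.
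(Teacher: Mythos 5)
Your proposal is correct; in fact the paper prints no proof at all for this proposition, so there is nothing to deviate from. Your parenthetical observation is the cleanest route: $h\circ(g\circ f)$ and $(h\circ g)\circ f$ are the same map $X\to W$, so their images of $\leftidx{^{\tilde{A}}}{\alpha}{}$ coincide by definition; the longer route via the functoriality identity $(g\circ f)(\tilde C)=g(f(\tilde C))$ (which the paper tacitly assumes in the proof of the preceding proposition and which you justify correctly using only associativity of arbitrary joins in the complete lattice $L$) is also fine and matches the spirit of the surrounding propositions.
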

 \begin{proposition}
 Let $f:X\longrightarrow Y$ be a mapping and $id_X:X\longrightarrow X$ be the identity mapping. Then for any $L$-fuzzy set $(X,\tilde{A})$ and $\alpha\in L$, $(f\circ id_X)(\leftidx{^{\tilde{A}}}{\alpha}{})=f(\leftidx{^{\tilde{A}}}{\alpha}{})$. 
 \end{proposition}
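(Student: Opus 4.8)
The plan is to reduce the identity to the known behaviour of $\leftidx{^{\tilde{A}}}{\alpha}{}$ under composition of maps established in the two preceding propositions. First I would note that $id_X$ is a genuine mapping, so all the machinery developed for images of fuzzy sets under maps applies; in particular, the proposition on $g\circ f$ gives $(f\circ id_X)(\leftidx{^{\tilde{A}}}{\alpha}{})=\leftidx{^{(f\circ id_X(\tilde{A}))}}{\alpha}{}$, which reduces the whole claim to the single fact $f\circ id_X(\tilde{A})=f(\tilde{A})$ at the level of ordinary fuzzy images. Alternatively, and more directly, one can unwind the image definition: for any $y\in Y$,
\begin{align*}
(f\circ id_X)(\leftidx{^{\tilde{A}}}{\alpha}{})(y)
&=\bigvee_{x\in X}\{\leftidx{^{\tilde{A}}}{\alpha}{}(x)\mid y=(f\circ id_X)(x)\}\\
&=\bigvee_{x\in X}\{\leftidx{^{\tilde{A}}}{\alpha}{}(x)\mid y=f(x)\}
=f(\leftidx{^{\tilde{A}}}{\alpha}{})(y),
\end{align*}
since $(f\circ id_X)(x)=f(id_X(x))=f(x)$ for every $x\in X$, so the two index sets over which the join is taken coincide.

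The key steps, in order, are: (i) recall that $id_X$ sends each $x$ to itself, hence $f\circ id_X=f$ as set-maps; (ii) invoke the image formula $f(\tilde B)(y)=\bigvee_x\{\tilde B(x)\mid y=f(x)\}$ with $\tilde B=\leftidx{^{\tilde{A}}}{\alpha}{}$; (iii) observe that replacing $f$ by $f\circ id_X$ leaves the condition $y=f(x)$ unchanged, so the suprema agree pointwise; (iv) conclude equality of the fuzzy subsets $(Y,(f\circ id_X)\leftidx{^{\tilde{A}}}{\alpha}{})$ and $(Y,f(\leftidx{^{\tilde{A}}}{\alpha}{}))$ over the common base set $Y$. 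One may equally phrase this as a one-line corollary of the composition proposition together with the trivial identity $id_X(\tilde{A})=\tilde{A}$.

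There is essentially no obstacle here: the statement is a functoriality-type triviality, asserting that composing with the identity map has no effect on the fuzzy $\alpha$-cut image. The only point requiring a moment's care is purely notational — keeping straight that the image $f(\leftidx{^{\tilde{A}}}{\alpha}{})$ lives over $Y$ while $\leftidx{^{\tilde{A}}}{\alpha}{}$ itself lives over $X$, and that the equality asserted is between fuzzy subsets of $Y$. I expect the proof to occupy a single displayed line.
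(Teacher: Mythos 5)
Your proof is correct, and both of your routes work: either unwind the supremum in the image formula and note that the condition $y=(f\circ id_X)(x)$ is literally the condition $y=f(x)$, or cite the composition proposition together with the trivial fact $id_X(\leftidx{^{\tilde{A}}}{\alpha}{})=\leftidx{^{\tilde{A}}}{\alpha}{}$. For what it is worth, the paper states this proposition without any argument of its own --- the proof environment that appears after the neighbouring $id_Y$ proposition is actually the misplaced proof of the earlier monotonicity claim $\alpha\leq\beta\Rightarrow f(\leftidx{^{\tilde{A}}}{\alpha}{})\subseteq f(\leftidx{^{\tilde{A}}}{\beta}{})$ --- so your one-line verification is exactly what is needed and fills a small gap in the text.
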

 \begin{proposition}
 Let $f:X\longrightarrow Y$ be a mapping and $id_Y:Y\longrightarrow Y$ be the identity mapping. Then for any $L$-fuzzy set $(X,\tilde{A})$ and $\alpha\in L$, $(id_Y\circ f)(\leftidx{^{\tilde{A}}}{\alpha}{})=f(\leftidx{^{\tilde{A}}}{\alpha}{})$. 
 \end{proposition}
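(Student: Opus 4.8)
The plan is to reduce the claim to the elementary observation that $id_Y\circ f$ and $f$ denote the very same mapping $X\longrightarrow Y$, together with the fact that the image of a fuzzy subset under a mapping depends on that mapping only as a set-theoretic function. The fuzzy $\alpha$-cut $\leftidx{^{\tilde{A}}}{\alpha}{}$ enters the argument only as an arbitrary fuzzy subset of $X$, so no special property of it is needed.

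First I would recall the image formula: for a mapping $g\colon X\longrightarrow Y$ and a fuzzy subset $(X,\tilde{C})$ one has $g(\tilde{C})(y)=\bigvee_{x\in X}\{\tilde{C}(x)\mid y=g(x)\}$, whose right-hand side refers to $g$ only through the equation $y=g(x)$. Next, since $(id_Y\circ f)(x)=id_Y(f(x))=f(x)$ for every $x\in X$, the indexing conditions $y=(id_Y\circ f)(x)$ and $y=f(x)$ coincide. Taking $\tilde{C}=\leftidx{^{\tilde{A}}}{\alpha}{}$ then gives, for each $y\in Y$,
\begin{equation*}
(id_Y\circ f)(\leftidx{^{\tilde{A}}}{\alpha}{})(y)=\bigvee_{x\in X}\{\leftidx{^{\tilde{A}}}{\alpha}{}(x)\mid y=(id_Y\circ f)(x)\}=\bigvee_{x\in X}\{\leftidx{^{\tilde{A}}}{\alpha}{}(x)\mid y=f(x)\}=f(\leftidx{^{\tilde{A}}}{\alpha}{})(y),
\end{equation*}
which is the desired identity.

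Alternatively, one can stay inside the machinery already developed: applying the composition Proposition with $g:=id_Y$ yields $(id_Y\circ f)(\leftidx{^{\tilde{A}}}{\alpha}{})=\leftidx{^{(id_Y\circ f(\tilde{A}))}}{\alpha}{}$, and since $id_Y(f(\tilde{A}))=f(\tilde{A})$ (immediate from the image formula, as $y=id_Y(x)$ iff $y=x$) this equals $\leftidx{^{(f(\tilde{A}))}}{\alpha}{}$, which by the Theorem above is $f(\leftidx{^{\tilde{A}}}{\alpha}{})$. There is no real obstacle here: the only thing to watch is bookkeeping, namely keeping the supremum attached to the correct indexing condition and not conflating the fuzzy set $f(\tilde{A})$ with its membership function. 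The statement is essentially the fuzzy-set analogue of the left identity law for composition, and it holds for the same reason.
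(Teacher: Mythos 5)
Your proof is correct, and both of your routes (the direct computation with the image formula, and the reduction via the composition proposition together with $id_Y(f(\tilde{A}))=f(\tilde{A})$) are sound; the key point, that the supremum $\bigvee_{x}\{\leftidx{^{\tilde{A}}}{\alpha}{}(x)\mid y=(id_Y\circ f)(x)\}$ is indexed by exactly the same condition as $\bigvee_{x}\{\leftidx{^{\tilde{A}}}{\alpha}{}(x)\mid y=f(x)\}$, is handled cleanly. For comparison, the paper does not actually supply an argument for this proposition: the proof environment printed immediately after it is a (misplaced) justification of the earlier monotonicity proposition, so your computation fills a gap rather than duplicating anything in the text.
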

 \begin{proof}
 $\alpha\leq\beta\Rightarrow \leftidx{^{\tilde{A}}}{\alpha}{}\subseteq \leftidx{^{\tilde{A}}}{\beta}{}\Rightarrow f(\leftidx{^{\tilde{A}}}{\alpha}{})\subseteq f(\leftidx{^{\tilde{A}}}{\beta}{})$.
 \end{proof}
\section{Algebraic structure of the family of fuzzy $\alpha$-cuts}
We shall establish that the family of fuzzy $\alpha$-cuts forms a localic frame [c.f. Definition \ref{gfl}] with respect to an $L$-fuzzy relation $R$ defined in terms of the G$\ddot{o}$del-like arrow [c.f. Definition \ref{gga}] in $L$. 
\begin{definition}[Localic Frame]\label{gfl}\index{graded frame}
A \textbf{localic frame} is a 5-tuple $(A,\top,\wedge,\bigvee,R_L)$, where $A$ is a non-empty set, $\top\in A$, $\wedge$ is a binary operation, $\bigvee$ is an operation on arbitrary subset of $A$, $R_L$ is an $L$-valued fuzzy binary relation on $A$ satisfying the following conditions:
\begin{enumerate}
\item $R_L(a,a)=1_L$ (fuzzy reflexivity);
\item $R_L(a,b)=1_L=R_L(b,a)\Rightarrow a=b$ (fuzzy antisymmetry);
\item $R_L(a,b)\wedge R_L(b,c)\leq R_L(a,c)$ (fuzzy transitivity);
\item $R_L(a\wedge b,a)=1_L=R_L(a\wedge b,b)$;
\item $R_L(a,\top)=1_L$;
\item $R_L(a,b)\wedge R_L(a,c)=R_L(a,b\wedge c)$;
\item $R_L(a,\bigvee S)=1_L$ if $a\in S$;
\item $inf\{R_L(a,b)\mid a\in S\}=R_L(\bigvee S,b)$;
\item $R_L(a\wedge\bigvee S,\bigvee\{a\wedge b\mid b\in S\})=1_L$;
\end{enumerate}
for any $a,b,c\in A$ and $S\subseteq A$. We will denote a localic frame by $(A,R_L)$. 
\end{definition}
In particular, $(A,\top,\wedge,\bigvee,R_{[0,1]})$ is a graded frame \cite{MP3}. Thus localic frame is a generalisation of graded frame. It is to be noted that an algebraic structure satisfying the first three conditions of Definition \ref{gfl} is known as localic poset \cite{JA}. That is, a localic poset is a set endowed with fuzzy partial order relation. However, this is not the only definition of fuzzy partial order. For more general definitions see \cite{MS}.
\begin{theorem}\label{theorem4}
$(\{\leftidx{^{\tilde{A}}}{\alpha}{}\mid \alpha \in L\},\subseteq, \cap, \bigcup)$ is a frame.
\end{theorem}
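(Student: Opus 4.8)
The plan is to verify directly that the family $\mathcal{F} = \{\leftidx{^{\tilde{A}}}{\alpha}{}\mid \alpha\in L\}$, ordered by the inclusion $\subseteq$ of fuzzy subsets, is a complete lattice in which binary meet distributes over arbitrary join. The natural strategy is to identify each $\leftidx{^{\tilde{A}}}{\alpha}{}$ with a fuzzy subset of the ambient $L$-fuzzy set $(X,\tilde A)$, observe that $\mathcal F$ sits inside the frame of all fuzzy subsets of $X$ (which is a frame because $L$ is), and then show that $\mathcal F$ is closed under the frame operations of that big frame — or, where it is not literally closed, that it nonetheless has meets and joins of the right kind computed inside $\mathcal F$. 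The key structural fact I would establish first is a normal form: for $\alpha,\beta\in L$ one has $\leftidx{^{\tilde A}}{\alpha}{}=\leftidx{^{\tilde A}}{\beta}{}$ iff $\alpha$ and $\beta$ cut off the same part of $\tilde A$, and more usefully that $\leftidx{^{\tilde A}}{\alpha}{}\cap\leftidx{^{\tilde A}}{\beta}{}=\leftidx{^{\tilde A}}{\alpha\vee\beta}{}$. This identity is the workhorse; it follows pointwise, since $\leftidx{^{\tilde A}}{\alpha}{}(x)\wedge\leftidx{^{\tilde A}}{\beta}{}(x)$ equals $\tilde A(x)$ exactly when $\tilde A(x)\geq\alpha$ and $\tilde A(x)\geq\beta$, i.e. when $\tilde A(x)\geq\alpha\vee\beta$, and is $0_L$ otherwise.

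Next I would pin down joins. Using part 2 of the first theorem, $\alpha\mapsto\leftidx{^{\tilde A}}{\alpha}{}$ is order-reversing, so a family $\{\leftidx{^{\tilde A}}{\alpha_i}{}\}_{i\in I}$ should have supremum governed by $\bigwedge_i\alpha_i$. The candidate is $\bigcup_i\leftidx{^{\tilde A}}{\alpha_i}{}=\leftidx{^{\tilde A}}{\bigwedge_i\alpha_i}{}$; again I would check this pointwise, the only subtlety being that $\bigvee_i\leftidx{^{\tilde A}}{\alpha_i}{}(x)=\tilde A(x)$ precisely when $\tilde A(x)\geq\alpha_i$ for some $i$, whereas $\leftidx{^{\tilde A}}{\bigwedge_i\alpha_i}{}(x)=\tilde A(x)$ when $\tilde A(x)\geq\bigwedge_i\alpha_i$. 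These two conditions need not coincide for a single pointwise value, but the join in $\mathcal F$ is the least element of $\mathcal F$ above all the $\leftidx{^{\tilde A}}{\alpha_i}{}$, and that least element is exactly $\leftidx{^{\tilde A}}{\bigwedge_i\alpha_i}{}$: it contains each $\leftidx{^{\tilde A}}{\alpha_i}{}$ by part 2, and any $\leftidx{^{\tilde A}}{\gamma}{}$ containing all of them forces $\gamma\leq\alpha_i$ for all $i$ (by comparing supports / membership values), hence $\gamma\leq\bigwedge_i\alpha_i$, hence $\leftidx{^{\tilde A}}{\gamma}{}\supseteq\leftidx{^{\tilde A}}{\bigwedge_i\alpha_i}{}$. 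So $\mathcal F$ is a complete lattice with $\top=\leftidx{^{\tilde A}}{0_L}{}=\tilde A$, $\bot=\leftidx{^{\tilde A}}{1_L}{}$, meets given by $\leftidx{^{\tilde A}}{\alpha}{}\wedge_{\mathcal F}\leftidx{^{\tilde A}}{\beta}{}=\leftidx{^{\tilde A}}{\alpha\vee\beta}{}$ (which, by part 3 of the first theorem, is just the pointwise $\cap$), and joins given by $\bigvee^{\mathcal F}_i\leftidx{^{\tilde A}}{\alpha_i}{}=\leftidx{^{\tilde A}}{\bigwedge_i\alpha_i}{}$.

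Finally I would verify the frame law $\leftidx{^{\tilde A}}{\alpha}{}\wedge\bigvee_i\leftidx{^{\tilde A}}{\alpha_i}{}=\bigvee_i\bigl(\leftidx{^{\tilde A}}{\alpha}{}\wedge\leftidx{^{\tilde A}}{\alpha_i}{}\bigr)$ by translating both sides through the anti-isomorphism with $L$. The left side is $\leftidx{^{\tilde A}}{\alpha}{}\wedge_{\mathcal F}\leftidx{^{\tilde A}}{\bigwedge_i\alpha_i}{}=\leftidx{^{\tilde A}}{\alpha\vee\bigwedge_i\alpha_i}{}$, and the right side is $\bigvee^{\mathcal F}_i\leftidx{^{\tilde A}}{\alpha\vee\alpha_i}{}=\leftidx{^{\tilde A}}{\bigwedge_i(\alpha\vee\alpha_i)}{}$; these agree because in any frame — indeed in any distributive complete lattice, and $L$ is a frame — finite join distributes over arbitrary meet in the form $\alpha\vee\bigwedge_i\alpha_i=\bigwedge_i(\alpha\vee\alpha_i)$. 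The main obstacle, and the place where care is needed, is precisely that the supremum in $\mathcal F$ is \emph{not} the pointwise union of fuzzy sets (the pointwise union of the $\leftidx{^{\tilde A}}{\alpha_i}{}$ need not be of the form $\leftidx{^{\tilde A}}{\gamma}{}$), so one cannot simply inherit the frame law from the frame of all fuzzy subsets of $X$; the argument must go through the order-reversing correspondence $\alpha\mapsto\leftidx{^{\tilde A}}{\alpha}{}$ and use the distributivity of $L$ directly. Once the meet/join descriptions above are nailed down, the verification of the frame identity is the short computation just indicated.
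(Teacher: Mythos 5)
Your proposal takes a genuinely different route from the paper, and the route contains real errors. The paper's proof is a one-line pointwise computation: it reads $\cap$ and $\bigcup$ as the pointwise operations on fuzzy subsets and gets the frame law from
$(\leftidx{^{\tilde{A}}}{\alpha}{}\cap\bigcup_i\leftidx{^{\tilde{A}}}{\alpha_i}{})(x)=\leftidx{^{\tilde{A}}}{\alpha}{}(x)\wedge\bigvee_i\leftidx{^{\tilde{A}}}{\alpha_i}{}(x)=\bigvee_i(\leftidx{^{\tilde{A}}}{\alpha}{}(x)\wedge\leftidx{^{\tilde{A}}}{\alpha_i}{}(x))$, which holds because $L$ is a frame. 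You instead try to compute joins internally in $\mathcal{F}=\{\leftidx{^{\tilde{A}}}{\alpha}{}\mid\alpha\in L\}$ via the map $\alpha\mapsto\leftidx{^{\tilde{A}}}{\alpha}{}$, and two steps fail. First, the formula $\bigvee^{\mathcal{F}}_i\leftidx{^{\tilde{A}}}{\alpha_i}{}=\leftidx{^{\tilde{A}}}{\bigwedge_i\alpha_i}{}$ is false: take $L=\{0,m,p,q,1\}$ with $p,q$ incomparable, $p\wedge q=m$, $p\vee q=1$, and $X=\{z\}$ with $\tilde{A}(z)=m$. Then $\leftidx{^{\tilde{A}}}{p}{}$ and $\leftidx{^{\tilde{A}}}{q}{}$ are both the zero function, so their least upper bound in $\mathcal{F}$ is the zero function $\leftidx{^{\tilde{A}}}{1}{}$, whereas $\leftidx{^{\tilde{A}}}{p\wedge q}{}=\leftidx{^{\tilde{A}}}{m}{}$ is the constant $m$. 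Your leastness step is exactly where this breaks: $\leftidx{^{\tilde{A}}}{\gamma}{}\supseteq\leftidx{^{\tilde{A}}}{\alpha_i}{}$ does \emph{not} force $\gamma\leq\alpha_i$ (here $\leftidx{^{\tilde{A}}}{1}{}\supseteq\leftidx{^{\tilde{A}}}{p}{}$ but $1\not\leq p$); the map $\alpha\mapsto\leftidx{^{\tilde{A}}}{\alpha}{}$ is order-reversing but neither injective nor order-reflecting, so there is no anti-isomorphism to translate through. Second, the identity $\alpha\vee\bigwedge_i\alpha_i=\bigwedge_i(\alpha\vee\alpha_i)$ that your final computation relies on does not hold in an arbitrary frame for infinite index sets: frames distribute finite meets over arbitrary joins, not finite joins over arbitrary meets (the frame of open subsets of $\mathbb{R}$ is a standard counterexample), and the theorem is about arbitrary joins.

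On the positive side, your workhorse identity $\leftidx{^{\tilde{A}}}{\alpha}{}\cap\leftidx{^{\tilde{A}}}{\beta}{}=\leftidx{^{\tilde{A}}}{\alpha\vee\beta}{}$ is correct, and your observation that the pointwise union of fuzzy $\alpha$-cuts need not again be a fuzzy $\alpha$-cut is legitimate: with the same $L$ and $\tilde{A}=(p,q,m)$ on a three-point set, $\leftidx{^{\tilde{A}}}{p}{}\cup\leftidx{^{\tilde{A}}}{q}{}=(p,q,0)$ is not of the form $\leftidx{^{\tilde{A}}}{\gamma}{}$, since $\gamma\leq p$ and $\gamma\leq q$ would force $\gamma\leq m$. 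So closure under $\bigcup$ is a genuine issue with the statement; but the paper's proof simply works with the pointwise union and verifies distributivity there, whereas your attempted repair --- replacing the pointwise union by an internal supremum with an explicit formula --- is the part that does not go through. To fix your argument you would have to first establish that $(\mathcal{F},\subseteq)$ is a complete lattice (it is closed under arbitrary pointwise meets, with $\bigcap_i\leftidx{^{\tilde{A}}}{\alpha_i}{}=\leftidx{^{\tilde{A}}}{\bigvee_i\alpha_i}{}$, so joins exist as meets of upper bounds) and then verify the frame law for those actual joins, rather than for the incorrect candidate $\leftidx{^{\tilde{A}}}{\bigwedge_i\alpha_i}{}$.
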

\begin{proof}
Here we only show the distributive property i.e., $$\leftidx{^{\tilde{A}}}{\alpha}{}\cap\bigcup_i\leftidx{^{\tilde{A}}}{\alpha_i}{}=\bigcup_i(\leftidx{^{\tilde{A}}}{\alpha}{}\cap\leftidx{^{\tilde{A}}}{\alpha_i}{}).$$
\begin{align*}
(\leftidx{^{\tilde{A}}}{\alpha}{}\cap\bigcup_i\leftidx{^{\tilde{A}}}{\alpha_i}{})(x) & = \leftidx{^{\tilde{A}}}{\alpha}{}(x)\wedge((\bigcup_i\leftidx{^{\tilde{A}}}{\alpha_i}{})(x))\\
& = \leftidx{^{\tilde{A}}}{\alpha}{}(x)\wedge(\bigvee_i\leftidx{^{\tilde{A}}}{\alpha_i}{}(x))\\
& = \bigvee_i(\leftidx{^{\tilde{A}}}{\alpha}{}(x)\wedge\leftidx{^{\tilde{A}}}{\alpha_i}{}(x))\ \text{[as $L$ is a frame]} \\
& = \bigvee_i(\leftidx{^{\tilde{A}}}{\alpha}{}\cap\leftidx{^{\tilde{A}}}{\alpha_i}{})(x))\\
& = (\bigcup_i(\leftidx{^{\tilde{A}}}{\alpha}{}\cap\leftidx{^{\tilde{A}}}{\alpha_i}{}))(x).
\end{align*}
This completes the proof.
\end{proof}

\subsection{Prelinear and Semilinear Frame}\label{PSF}
In this subsection we will consider prelinear frame and semilinear frame. This subsection includes detailed study of the above mentioned notions with examples.
\begin{definition}[Prelinear Frame]\label{pfm}\cite{RB}
A frame $L$ together with a binary operation $\rightarrow$ is said to be a prelinear if for each $l_1,l_2\in L$, $(l_1\rightarrow l_2)\vee (l_2\rightarrow l_1)=\top$, where $\top$ is the top element of $L$.
\end{definition}
For our purpose we will take $\rightarrow$ as G$\ddot{o}$del-like arrow. In this section henceforth all the arrows are G$\ddot{o}$del-like arrow.

\underline{\textbf{Note:}} For the notion of prelinearity in more general set up we refer
to \cite{PH}. Whenever there is an $\rightarrow$ satisfying the property $l_1\leq l_2$ 
implies $l_1\rightarrow l_2=\top$, linearity of the order implies prelinearity. Our purpose here will be served under the assumption of a notion more general than prelinearity viz. semilinearity (c.f. Definition \ref{sfm}). The purpose is to generalize the notion of graded frame \cite{MP3}  where the value set is taken as $[0,1]$. 
\begin{definition}[Semilinear Frame]\label{sfm}
A semilinear frame $L=(L,\wedge,\bigvee,\rightarrow)$ is a frame $(L,\wedge,\bigvee)$ together with a binary operation $\rightarrow$ such that for all $l_1,l_2,l_3\in L$, $(l_1\rightarrow l_2)\wedge (l_1\rightarrow l_3)= (l_1\rightarrow l_2\wedge l_3)$.
\end{definition}
It can be verified by considering all possible cases that any frame with up to 4-elements is always preilinear.
We give below an example of a 5-element lattice which is the smallest semilinear but not prelinear frame.
\begin{example}\label{snp}
The following frame is not prelinear but semilinear.
\begin{center}
\begin{tikzpicture}
  \matrix (galois)
     [matrix of nodes,%
      nodes in empty cells,
      nodes={outer sep=0pt,circle,minimum size=4pt},
      column sep={1cm,between origins},
      row sep={1cm,between origins}]
   {
    && $\top$ &&\\
    &  & a & \\
    & b &  & c\\
    && $\bot$ &&\\
   };
      \draw (galois-1-3) -- (galois-2-3);
      \draw (galois-2-3) -- (galois-3-2);
      \draw (galois-2-3) -- (galois-3-4);
       \draw (galois-3-2) -- (galois-4-3);
       \draw (galois-3-4) -- (galois-4-3);

\end{tikzpicture} 
\end{center}
For this frame $(b\rightarrow c)\vee (c\rightarrow b)=c\vee b=a\neq\top$. Hence it is not prelinear.  
\end{example}

The following is an example of a frame with six elements which is not semilinear.
It is to be noted that the following frame is the smallest non semilinear frame. In other words a non-semilinear distributive lattice contains at least six elements.

\begin{example}
The following frame is not semilinear.
\begin{center}
\begin{tikzpicture}
  \matrix (galois)
     [matrix of nodes,%
      nodes in empty cells,
      nodes={outer sep=0pt,circle,minimum size=4pt},
      column sep={1cm,between origins},
      row sep={1cm,between origins}]
   {
    && $\top$ &&\\
    & a &  & d\\
     &  & b & c\\
    && $\bot$ &&\\
   };
     \draw (galois-1-3) -- (galois-2-2);
     \draw (galois-1-3) -- (galois-2-4);
      \draw (galois-2-2) -- (galois-3-3);
       \draw (galois-2-4) -- (galois-3-3);
      \draw (galois-2-4) -- (galois-3-4);
       \draw (galois-2-2) -- (galois-4-3);
      \draw (galois-3-3) -- (galois-4-3);
       \draw (galois-3-4) -- (galois-4-3);

\end{tikzpicture} 
\end{center}  
For this frame $(b\rightarrow a)\wedge (b\rightarrow c)=\top\wedge c=c$, whereas $b\rightarrow (a\wedge c)=b\rightarrow \bot=\bot$. So, Property \ref{prop5} fails.
\end{example} 
\begin{example}
The following frame is the smallest Boolean algebra which is not semilinear.
\begin{center}
\begin{tikzpicture}
  \matrix (galois)
     [matrix of nodes,%
      nodes in empty cells,
      nodes={outer sep=0pt,circle,minimum size=4pt},
      column sep={1cm,between origins},
      row sep={1cm,between origins}]
   {
    && $\top$ &&\\
    & d & e & f\\
     & a & b & c\\
    && $\bot$ &&\\
   };
     \draw (galois-1-3) -- (galois-2-2);
      \draw (galois-1-3) -- (galois-2-3);
     \draw (galois-1-3) -- (galois-2-4);
     \draw (galois-2-2) -- (galois-3-2);
      \draw (galois-2-2) -- (galois-3-3);
      \draw (galois-2-3) -- (galois-3-2);
      \draw (galois-2-3) -- (galois-3-4);
       \draw (galois-2-4) -- (galois-3-3);
      \draw (galois-2-4) -- (galois-3-4);
       \draw (galois-3-2) -- (galois-4-3);
      \draw (galois-3-3) -- (galois-4-3);
       \draw (galois-3-4) -- (galois-4-3);

\end{tikzpicture} 
\end{center}  
For this frame $(a\rightarrow c)\wedge (a\rightarrow d)=c\wedge \top=c$, whereas $a\rightarrow (c\wedge d)=a\rightarrow \bot=\bot$. So, Property \ref{prop5} fails.
\end{example}

One can see that the concepts of prelinearity and semilinearity are based on the underlying lattice of the frame which is distributive. While prelinearity is an well known concept, semilinearity is not so and which is a more general concept [c.f. Property \ref{prop5'}]

Before proceeding to the next theorem let us enlist below some properties of G$\ddot{o}$del-like arrow \cite{KY} that would be used in the sequel. 
\subsection{Properties of G$\ddot{o}$del-like arrow}\label{pga}
In this subsection some required properties of G$\ddot{o}$del-like arrow are listed along with verification of some of them.
\begin{property}\label{prop1}
$a\rightarrow a=1_L$, for any $a\in L$.
\end{property}
\begin{property}\label{prop2}
$(a\rightarrow b)\wedge (b\rightarrow c)\leq (a\rightarrow c)$, for any $a,b,c\in L$.
\end{property}
\begin{proof}
It may be observed that the values of $a\rightarrow b$ is either $1_L$ or $b$. Similarly for $b\rightarrow c$ the values are either $1_L$ or $c$ and for $a\rightarrow c$ values are either $1_L$ or $c$. Now the following cases may arise:\\
Case 1: $a\rightarrow b=1_L$ and $b\rightarrow c=1_L$.\\
Here $a\leq b$ and $b\leq c$ and hence as $L$ is transitive, $a\leq c$. Consequently $a\rightarrow c=1_L$. Therefore $(a\rightarrow b)\wedge (b\rightarrow c)=1_L\wedge 1_L=1_L=(a\rightarrow c)$.\\
Case 2: $a\rightarrow b=1_L$ and $b\rightarrow c=c$.\\
We have $(a\rightarrow b)\wedge (b\rightarrow c)=1_L\wedge c=c\leq c\ (or\ 1_L)=(a\rightarrow c)$.\\
Case 3: $a\rightarrow b=b$ and $b\rightarrow c=1_L$.\\
As $b\leq c$,  $(a\rightarrow b)\wedge (b\rightarrow c)=b\wedge 1_L=b\leq c\ (or\ 1_L)=(a\rightarrow c)$.\\
Case 4: $a\rightarrow b=b$ and $b\rightarrow c=c$.\\
In this case $(a\rightarrow b)\wedge (b\rightarrow c)=b\wedge c=c\leq c\ (or\ 1_L)=(a\rightarrow c)$.
\end{proof}
\begin{property}\label{prop3}
 $a\leq b$ implies $(a\rightarrow x)\geq (b\rightarrow x)$, for any $a,b,x\in L$.
 \end{property}
\begin{property}\label{prop4}
$a\leq b$ implies $(x\rightarrow a)\leq (x\rightarrow b)$, for any $a,b,x\in L$.
\end{property}
\begin{property}\label{prop5'}
If $L$ is prelinear then it is semilinear.
\end{property}
\begin{proof}
If possible let  $L$ is prelinear i.e., $(a\rightarrow b)\vee (b\rightarrow a)=\top$, for any $a,b\in L$ and for some $a,b,c\in L$, $(a\rightarrow b)\wedge (a\rightarrow c)\neq a\rightarrow (b\wedge c)$. Then two cases may arise. Case 1: $a< b$, $(a,c)$ and $(b,c)$ are incomparable [where $(a,b)$ represents the pair of points from $L$]. Case 2: $a< c$, $(a,b)$ and $(b,c)$ are incomparable. 

Case 1: In this case notice that $a\vee c=\top$ as $(c\rightarrow a)\vee (a\rightarrow c)=\top$ and $a,c$ are incomparable. Hence $b\wedge(c\vee a)=b\wedge \top=b$.
Now $(b\wedge c)\vee (b\wedge a)=(b\wedge c)\vee a$. The following three cases may arise under this situation. Either $b\wedge c\leq a$ or $b\wedge c>a$ or the pair $(b\wedge c,a)$ is incomparable. If $b\wedge c\leq a$, then $(b\wedge c)\vee a=a\neq b$. When $b\wedge c>a$ then $(b\wedge c)\vee a=b\wedge c\neq b$ as if $b\wedge c=b$ then $b\leq c$, a contradiction. As $L$ is prelinear $(a\rightarrow(b\wedge c))\vee ((b\wedge c)\rightarrow a)=\top$. When the pair $(b\wedge c,a)$ is incomparable then $(a\rightarrow(b\wedge c))\vee ((b\wedge c)\rightarrow a)=(b\wedge c)\vee a=\top\neq b$ (as $b$ and $c$ are incomparable).

Hence for either cases $b\wedge(c\vee a)=b\neq (b\wedge c)\vee (b\wedge a)$, but $L$ is distributive.

Similarly for Case 2 also we get a contradiction.
\end{proof}
\begin{corollary}\label{prop5}
 If $L$ is totally ordered frame then $(a\rightarrow b)\wedge (a\rightarrow c)=a\rightarrow (b\wedge c)$, for any $a,b,c\in L$.
 \end{corollary}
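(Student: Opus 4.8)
\textbf{Proof proposal for Corollary \ref{prop5}.}

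The plan is to derive this directly from Property \ref{prop5'} rather than by a fresh case analysis. First I would observe that a totally ordered frame is automatically prelinear when $\rightarrow$ is the G\"odel-like arrow: given any $l_1,l_2\in L$, total order gives either $l_1\leq l_2$ or $l_2\leq l_1$, and in the first case $l_1\rightarrow l_2=1_L$ while in the second case $l_2\rightarrow l_1=1_L$, so $(l_1\rightarrow l_2)\vee(l_2\rightarrow l_1)=1_L=\top$. Hence $L$ satisfies Definition \ref{pfm}.

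Having established prelinearity, Property \ref{prop5'} applies verbatim and tells us that $L$ is semilinear, which by Definition \ref{sfm} is exactly the assertion $(a\rightarrow b)\wedge(a\rightarrow c)=a\rightarrow(b\wedge c)$ for all $a,b,c\in L$. This completes the argument in two short steps.

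Alternatively, for a self-contained argument one could bypass Property \ref{prop5'} and argue directly: since $L$ is totally ordered, $b\wedge c$ equals $\min(b,c)$, say $b$ without loss of generality (so $b\leq c$). Then $a\rightarrow(b\wedge c)=a\rightarrow b$, and on the other side $a\rightarrow c\geq a\rightarrow b$ by Property \ref{prop4}, so the meet $(a\rightarrow b)\wedge(a\rightarrow c)$ collapses to $a\rightarrow b$ as well. Either route is routine; I would present the first since it highlights that the corollary is just the special case of Property \ref{prop5'} for linear orders, and the only thing to check is the (trivial) implication ``totally ordered $\Rightarrow$ prelinear''. There is no real obstacle here — the one point worth stating explicitly is the reduction of total order to prelinearity, after which the result is immediate.
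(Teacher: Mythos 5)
Your proposal is correct and matches the paper's intended justification: the corollary is stated without proof precisely because, as the paper's Note observes, linearity of the order implies prelinearity for any arrow with $l_1\leq l_2\Rightarrow l_1\rightarrow l_2=\top$, and then Property \ref{prop5'} gives semilinearity. Your alternative direct argument via $b\wedge c=\min(b,c)$ and Property \ref{prop4} is also sound, but the first route is exactly what the paper has in mind.
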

   


\begin{property}\label{prop6}
$inf_i\{(a_i\rightarrow b)\}=sup_i\{a_{i}\}\rightarrow b$, for any $a_i,b\in L$.
\end{property}
\begin{proof}
$sup_i\{a_i\}\rightarrow b=\begin{cases}
        1 & \text{if}\ \ sup_i\{a_i\}\leq b \\
        b & \text{otherwise}.
    \end{cases}$\\
Now for $sup_i\{a_i\}\leq b$ we have $a_i\leq sup_i\{a_i\}\leq b$.\\
Hence for this case $(a_i\rightarrow b)=1$, for each $i$ and consequently $inf_i\{a_i\rightarrow b\}=1$.\\
If $sup_i\{a_i\}>b$ then there exist atleast one $a_i$ such that $a_i>b$ and rest will be either bellow $b$ or equal to $b$. Now for the case $a_i>b$, $a_i\rightarrow b=b$ and for all other cases $a_i\rightarrow b=1$. As $b\leq 1$, $inf_i\{a_i\rightarrow b\}=b$. If $sup\{a_i\}$ and $b$ are incomparable then atleast one of the $a_i$'s, say $a_j$ is incomparable to $b$ and consequently $a_j\rightarrow b=b$. Hence $inf_i\{a_i\rightarrow b\}=b$.
\end{proof}
\begin{property}\label{prop7}
$a\leq b$ iff $a\rightarrow b=1_L$.
\end{property}
\begin{property}\label{prop8}
 $a\wedge (a\rightarrow b)\leq b$.
 \end{property}
 It is to be noted that all the above properties are true for generalised G$\ddot{o}$del arrow as these properties are satisfied by any  arrow with residuation property. That means residuated arrow satisfies semilinear property but there are semilinear arrows which are not residuated arrows.
We now proceed to the main theorem which constitutes subsection \ref{TH}
\subsection{Main Theorem}\label{TH}
\begin{theorem}
Let $L$ be semilinear frame. Then
$(\{\leftidx{^{\tilde{A}}}{\alpha}{}\mid \alpha \in L\},\leftidx{^{\tilde{A}}}{0_L}{}, \cap, \bigcup,R_L)$ is a localic frame, where $R_L(\leftidx{^{\tilde{A}}}{\alpha_1}{}, \leftidx{^{\tilde{A}}}{\alpha_2}{})=inf_x\{\leftidx{^{\tilde{A}}}{\alpha_1}{}(x)\rightarrow \leftidx{^{\tilde{A}}}{\alpha_2}{}(x)\}$ for $\alpha_1$, $\alpha_2\in L$ and `$\rightarrow$' is the G$\ddot{o}$del-like arrow.
\end{theorem}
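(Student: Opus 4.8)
The plan is to verify, one by one, the nine defining conditions of a localic frame for the structure $(\{\leftidx{^{\tilde{A}}}{\alpha}{}\mid \alpha\in L\},\leftidx{^{\tilde{A}}}{0_L}{},\cap,\bigcup,R_L)$, drawing on the fact that this family is already a frame (Theorem~\ref{theorem4}) and on the listed properties of the G\"odel-like arrow (Properties~\ref{prop1}--\ref{prop8}). The guiding observation is that $R_L$ is obtained pointwise from the arrow, $R_L(\leftidx{^{\tilde{A}}}{\alpha_1}{},\leftidx{^{\tilde{A}}}{\alpha_2}{})=\inf_x\{\leftidx{^{\tilde{A}}}{\alpha_1}{}(x)\rightarrow\leftidx{^{\tilde{A}}}{\alpha_2}{}(x)\}$, so each condition should reduce to a pointwise statement about $\rightarrow$ in $L$, followed by an application of $\inf$. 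First I would record the elementary fact that, for fuzzy subsets $\tilde{B}\subseteq\tilde{C}$, Property~\ref{prop7} gives $R_L(\tilde{B},\tilde{C})=1_L$, and conversely $R_L(\tilde{B},\tilde{C})=1_L$ forces $\tilde{B}(x)\rightarrow\tilde{C}(x)=1_L$ for every $x$, hence $\tilde{B}\subseteq\tilde{C}$; this single lemma handles conditions (1),(2),(4),(5),(7),(9) at once, since each of those asserts an inclusion that already holds in the frame $\{\leftidx{^{\tilde{A}}}{\alpha}{}\}$ (e.g. $\leftidx{^{\tilde{A}}}{\alpha}{}\cap\leftidx{^{\tilde{B}}}{\alpha}{}\subseteq\leftidx{^{\tilde{A}}}{\alpha}{}$, $\leftidx{^{\tilde{A}}}{\alpha}{}\subseteq\leftidx{^{\tilde{A}}}{0_L}{}$ because $0_L$ is the smallest parameter, membership of a join, and the frame distributive law of Theorem~\ref{theorem4}).

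Next I would treat the two genuinely algebraic conditions. For condition (3), fuzzy transitivity, the pointwise version is exactly Property~\ref{prop2}: for each $x$, $(\leftidx{^{\tilde{A}}}{\alpha_1}{}(x)\rightarrow\leftidx{^{\tilde{A}}}{\alpha_2}{}(x))\wedge(\leftidx{^{\tilde{A}}}{\alpha_2}{}(x)\rightarrow\leftidx{^{\tilde{A}}}{\alpha_3}{}(x))\le\leftidx{^{\tilde{A}}}{\alpha_1}{}(x)\rightarrow\leftidx{^{\tilde{A}}}{\alpha_3}{}(x)$; taking $\inf_x$ on both sides, and using that $\inf$ of a pointwise meet is below the meet of the two $\inf$'s in the reverse direction—more carefully, that $\inf_x(f(x)\wedge g(x))\le \inf_x f(x)$ is false in general, so one must argue $\inf_x f(x)\wedge\inf_x g(x)\le f(x)\wedge g(x)\le h(x)$ for every $x$, hence $\le\inf_x h(x)$—yields $R_L(\tilde{B},\tilde{C})\wedge R_L(\tilde{C},\tilde{D})\le R_L(\tilde{B},\tilde{D})$. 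For condition (6), $R_L(\tilde{B},\tilde{C})\wedge R_L(\tilde{B},\tilde{D})=R_L(\tilde{B},\tilde{C}\cap\tilde{D})$, the pointwise identity is the semilinearity axiom of Definition~\ref{sfm}: $(l_1\rightarrow l_2)\wedge(l_1\rightarrow l_3)=l_1\rightarrow(l_2\wedge l_3)$, applied with $l_1=\tilde{B}(x)$, $l_2=\tilde{C}(x)$, $l_3=\tilde{D}(x)$. Here is precisely where the hypothesis ``$L$ semilinear'' is used, and it is the only place it is needed. After the pointwise equality one takes $\inf_x$; the inequality $\inf_x\big(f(x)\wedge g(x)\big)\le R_L(\tilde B,\tilde C\cap\tilde D)$ and the reverse both follow from the above remark applied in each direction, so the $\inf$ distributes over this particular binary meet.

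For condition (8), $\inf\{R_L(a,b)\mid a\in S\}=R_L(\bigvee S,b)$, I would specialize to $S=\{\leftidx{^{\tilde{A}}}{\alpha_i}{}\}_{i}$, so $\bigvee S=\bigcup_i\leftidx{^{\tilde{A}}}{\alpha_i}{}$ and pointwise $(\bigvee S)(x)=\bigvee_i\leftidx{^{\tilde{A}}}{\alpha_i}{}(x)=\sup_i\leftidx{^{\tilde{A}}}{\alpha_i}{}(x)$. Then $R_L(\bigvee S,\tilde{C})=\inf_x\big(\sup_i\leftidx{^{\tilde{A}}}{\alpha_i}{}(x)\rightarrow\tilde{C}(x)\big)$, and by Property~\ref{prop6} the inner expression equals $\inf_i\big(\leftidx{^{\tilde{A}}}{\alpha_i}{}(x)\rightarrow\tilde{C}(x)\big)$, so $R_L(\bigvee S,\tilde{C})=\inf_x\inf_i\big(\leftidx{^{\tilde{A}}}{\alpha_i}{}(x)\rightarrow\tilde{C}(x)\big)=\inf_i\inf_x\big(\leftidx{^{\tilde{A}}}{\alpha_i}{}(x)\rightarrow\tilde{C}(x)\big)=\inf_i R_L(\leftidx{^{\tilde{A}}}{\alpha_i}{},\tilde{C})$, using that a double infimum may be taken in either order. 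I anticipate that the main obstacle is not any single axiom but the repeated, slightly delicate bookkeeping of passing from pointwise (in)equalities in $L$ to statements about the infima $R_L(\cdot,\cdot)$: $\inf$ commutes with arbitrary $\sup$ inside the arrow only because of Property~\ref{prop6}, and it ``distributes over'' the binary meets in (3) and (6) only in the one-directional sense made precise above, so care is needed to avoid asserting a false $\inf$-distributivity. One should also confirm at the outset that every element named in the axioms—$\leftidx{^{\tilde{A}}}{\alpha}{}\cap\leftidx{^{\tilde{B}}}{\alpha}{}$, $\bigcup S$, $\leftidx{^{\tilde{A}}}{0_L}{}$—again lies in the family $\{\leftidx{^{\tilde{A}}}{\alpha}{}\mid\alpha\in L\}$, which is guaranteed by Theorem~\ref{theorem4} (closure under $\cap$ and arbitrary $\bigcup$) together with the observation that $\leftidx{^{\tilde{A}}}{0_L}{}=\tilde{A}$ is itself a member.
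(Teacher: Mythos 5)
Your proposal is correct and follows essentially the same route as the paper: each of the nine axioms is verified pointwise from the listed properties of the G\"odel-like arrow (with semilinearity invoked exactly once, for axiom 6, just as in the paper's proof), and your packaging of axioms 1, 2, 4, 5, 7 and 9 into the single lemma $R_L(\tilde{B},\tilde{C})=1_L$ iff $\tilde{B}\subseteq\tilde{C}$ is only a cosmetic streamlining of what the paper checks case by case. One small slip worth noting: your parenthetical claim that $\inf_x\bigl(f(x)\wedge g(x)\bigr)\le\inf_x f(x)$ is ``false in general'' is itself incorrect (binary meet commutes with arbitrary infima in any complete lattice), but the inequality you actually use, namely $\inf_x f\wedge\inf_x g\le f(x)\wedge g(x)\le h(x)$ for every $x$ and hence $\le\inf_x h(x)$, is the right one, so the argument is unaffected.
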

\begin{proof}
Let us check the properties for $(\{\leftidx{^{\tilde{A}}}{\alpha}{}\mid \alpha \in L\},\leftidx{^{\tilde{A}}}{0_L}{}, \cap, \bigcup,R_L)$ to be a localic frame.
\begin{enumerate}
\item $R_L(\leftidx{^{\tilde{A}}}{\alpha}{},\leftidx{^{\tilde{A}}}{\alpha}{})=inf_x\{\leftidx{^{\tilde{A}}}{\alpha}{}(x)\rightarrow \leftidx{^{\tilde{A}}}{\alpha}{}(x)\}=1_L$ [from Property \ref{prop1} $\leftidx{^{\tilde{A}}}{\alpha}{}(x)\rightarrow\leftidx{^{\tilde{A}}}{\alpha}{}(x)=1_L$, for all $x$].

\item Let $R_L(\leftidx{^{\tilde{A}}}{\alpha_1}{},\leftidx{^{\tilde{A}}}{\alpha_2}{})=1_L=R_L(\leftidx{^{\tilde{A}}}{\alpha_2}{},\leftidx{^{\tilde{A}}}{\alpha_1}{})$. So, $inf_x\{\leftidx{^{\tilde{A}}}{\alpha_1}{}(x)\rightarrow \leftidx{^{\tilde{A}}}{\alpha_2}{}(x)\}=1_L=inf_x\{\leftidx{^{\tilde{A}}}{\alpha_2}{}(x)\rightarrow \leftidx{^{\tilde{A}}}{\alpha_1}{}(x)\}$. Therefore $\leftidx{^{\tilde{A}}}{\alpha_1}{}(x)\leq\leftidx{^{\tilde{A}}}{\alpha_2}{}(x)$ and $\leftidx{^{\tilde{A}}}{\alpha_2}{}(x)\leq\leftidx{^{\tilde{A}}}{\alpha_1}{}(x)$, for all $x$. So, $\leftidx{^{\tilde{A}}}{\alpha_1}{}(x)=\leftidx{^{\tilde{A}}}{\alpha_2}{}(x)$, for any $x$. Hence $\leftidx{^{\tilde{A}}}{\alpha_1}{}=\leftidx{^{\tilde{A}}}{\alpha_2}{}$.

\item From Property \ref{prop2}, we have $(\leftidx{^{\tilde{A}}}{\alpha_1}{}(x)\rightarrow \leftidx{^{\tilde{A}}}{\alpha_2}{}(x))\wedge (\leftidx{^{\tilde{A}}}{\alpha_2}{}(x)\rightarrow \leftidx{^{\tilde{A}}}{\alpha_3}{}(x))\leq (\leftidx{^{\tilde{A}}}{\alpha_1}{}(x)\rightarrow\leftidx{^{\tilde{A}}}{\alpha_3}{}(x))$, for all $x$. Hence $inf_x\{(\leftidx{^{\tilde{A}}}{\alpha_1}{}(x)\rightarrow\leftidx{^{\tilde{A}}}{\alpha_2}{}(x))\wedge(\leftidx{^{\tilde{A}}}{\alpha_2}{}(x)\rightarrow\leftidx{^{\tilde{A}}}{\alpha_3}{}(x))\}\leq (\leftidx{^{\tilde{A}}}{\alpha_1}{}(x)\rightarrow\leftidx{^{\tilde{A}}}{\alpha_3}{}(x))$, for any $x$ and consequently $inf_x\{(\leftidx{^{\tilde{A}}}{\alpha_1}{}(x)\rightarrow\leftidx{^{\tilde{A}}}{\alpha_2}{}(x))\wedge(\leftidx{^{\tilde{A}}}{\alpha_2}{}(x)\rightarrow\leftidx{^{\tilde{A}}}{\alpha_3}{}(x))\}\leq inf_x\{\leftidx{^{\tilde{A}}}{\alpha_1}{}(x)\rightarrow\leftidx{^{\tilde{A}}}{\alpha_3}{}(x)\}$. Therefore,
\begin{align*}
& R_L(\leftidx{^{\tilde{A}}}{\alpha_1}{},\leftidx{^{\tilde{A}}}{\alpha_2}{})\wedge R_L(\leftidx{^{\tilde{A}}}{\alpha_2}{},\leftidx{^{\tilde{A}}}{\alpha_3}{})\\ & = inf_x\{\leftidx{^{\tilde{A}}}{\alpha_1}{}(x)\rightarrow \leftidx{^{\tilde{A}}}{\alpha_2}{}(x)\}\wedge inf_x\{\leftidx{^{\tilde{A}}}{\alpha_2}{}(x)\rightarrow \leftidx{^{\tilde{A}}}{\alpha_3}{}(x)\}\\
 & \leq inf_x\{(\leftidx{^{\tilde{A}}}{\alpha_1}{}(x)\rightarrow \leftidx{^{\tilde{A}}}{\alpha_2}{}(x))\wedge (\leftidx{^{\tilde{A}}}{\alpha_2}{}(x)\rightarrow \leftidx{^{\tilde{A}}}{\alpha_3}{}(x))\}\\
 & \leq inf_x\{\leftidx{^{\tilde{A}}}{\alpha_1}{}(x)\rightarrow\leftidx{^{\tilde{A}}}{\alpha_3}{}(x)\}\ \ \ \ \ [\text{using Property \ref{prop2}}]\\
 & = R_L(\leftidx{^{\tilde{A}}}{\alpha_1}{},\leftidx{^{\tilde{A}}}{\alpha_3}{}).
 \end{align*}
 
 \item $R_L(\leftidx{^{\tilde{A}}}{\alpha_1}{}\wedge\leftidx{^{\tilde{A}}}{\alpha_2}{},\leftidx{^{\tilde{A}}}{\alpha_1}{})=inf_x\{(\leftidx{^{\tilde{A}}}{\alpha_1}{}\wedge \leftidx{^{\tilde{A}}}{\alpha_2}{})(x)\rightarrow \leftidx{^{\tilde{A}}}{\alpha_1}{}(x)\}=1_L$, as $\leftidx{^{\tilde{A}}}{\alpha_1}{}\cap\leftidx{^{\tilde{A}}}{\alpha_2}{}\subseteq \leftidx{^{\tilde{A}}}{\alpha_1}{}$. Similarly $R_L(\leftidx{^{\tilde{A}}}{\alpha_1}{}\wedge\leftidx{^{\tilde{A}}}{\alpha_2}{},\leftidx{^{\tilde{A}}}{\alpha_2}{})=1_L$.
 
 \item $R_L(\leftidx{^{\tilde{A}}}{\alpha_1}{},\leftidx{^{\tilde{A}}}{0_L}{})=inf_x\{\leftidx{^{\tilde{A}}}{\alpha_1}{}(x)\rightarrow \leftidx{^{\tilde{A}}}{0_L}{}(x)\}=1_L$, as $\leftidx{^{\tilde{A}}}{0_L}{}(x)=\tilde{A}(x)$ and $\leftidx{^{\tilde{A}}}{\alpha_1}{}(x)\leq\tilde{A}(x)$, for any $x$.
 
 \item We have,
 \begin{align*}
 & R_L(\leftidx{^{\tilde{A}}}{\alpha_1}{},\leftidx{^{\tilde{A}}}{\alpha_2}{})\wedge R_L(\leftidx{^{\tilde{A}}}{\alpha_1}{},\leftidx{^{\tilde{A}}}{\alpha_3}{})\\
 & = inf_x\{\leftidx{^{\tilde{A}}}{\alpha_1}{}(x)\rightarrow \leftidx{^{\tilde{A}}}{\alpha_2}{}(x)\}\wedge inf_x\{\leftidx{^{\tilde{A}}}{\alpha_1}{}(x)\rightarrow\leftidx{^{\tilde{A}}}{\alpha_3}{}(x)\}\\
 & = inf_x\{(\leftidx{^{\tilde{A}}}{\alpha_1}{}(x)\rightarrow\leftidx{^{\tilde{A}}}{\alpha_2}{}(x))\wedge (\leftidx{^{\tilde{A}}}{\alpha_1}{}(x)\rightarrow \leftidx{^{\tilde{A}}}{\alpha_3}{}(x))\}\\
 & = inf_x\{\leftidx{^{\tilde{A}}}{\alpha_1}{}(x)\rightarrow (\leftidx{^{\tilde{A}}}{\alpha_2}{}(x)\wedge\leftidx{^{\tilde{A}}}{\alpha_3}{}(x))\}\ \ \ \ \ [\text{using Property \ref{prop5}}]\\
 & = inf_x\{\leftidx{^{\tilde{A}}}{\alpha_1}{}(x)\rightarrow (\leftidx{^{\tilde{A}}}{\alpha_2}{}\cap\leftidx{^{\tilde{A}}}{\alpha_3}{})(x)\}\\
 & = R_L(\leftidx{^{\tilde{A}}}{\alpha_1}{},\leftidx{^{\tilde{A}}}{\alpha_2}{}\cap\leftidx{^{\tilde{A}}}{\alpha_3}{}).
 \end{align*}
 
 \item Let $\leftidx{^{\tilde{A}}}{\alpha}{}\in \{\leftidx{^{\tilde{A}}}{\alpha_i}{}\}_i$, then 
 \begin{align*}
 R_L(\leftidx{^{\tilde{A}}}{\alpha}{},\bigcup_i\leftidx{^{\tilde{A}}}{\alpha_i}{}) & = inf_x\{\leftidx{^{\tilde{A}}}{\alpha}{}(x)\rightarrow (\bigcup_i\leftidx{^{\tilde{A}}}{\alpha_i}{})(x)\}\\
 & = inf_x\{\leftidx{^{\tilde{A}}}{\alpha}{}(x)\rightarrow \bigvee_i\leftidx{^{\tilde{A}}}{\alpha_i}{}(x)\}\\
 & = 1_L\ \ \ \ \ [\text{using Property \ref{prop7}}].
 \end{align*}
 
 \item Here we have,
 \begin{align*}
 inf_i\{R_L(\leftidx{^{\tilde{A}}}{\alpha_i}{},\leftidx{^{\tilde{A}}}{\alpha}{})\}
 & = inf_i\{inf_x\{\leftidx{^{\tilde{A}}}{\alpha_i}{}(x)\rightarrow \leftidx{^{\tilde{A}}}{\alpha}{}(x)\}\}\\
 & =  inf_x\{inf_i\{\leftidx{^{\tilde{A}}}{\alpha_i}{}(x)\rightarrow \leftidx{^{\tilde{A}}}{\alpha}{}(x)\}\}\\
 & = inf_x\{\bigvee_i(\leftidx{^{\tilde{A}}}{\alpha_i}{}(x))\rightarrow \leftidx{^{\tilde{A}}}{\alpha}{}(x)\} \ \ \ \ \ [using Property \ref{prop6}]\\
 & = inf_x\{(\bigcup_i\leftidx{^{\tilde{A}}}{\alpha_i}{})(x)\rightarrow \leftidx{^{\tilde{A}}}{\alpha}{}(x)\}\\
 & = R_L(\bigcup_i\leftidx{^{\tilde{A}}}{\alpha_i}{},\leftidx{^{\tilde{A}}}{\alpha}{}).
 \end{align*}
 
 \item From Theorem \ref{theorem4} we have $\leftidx{^{\tilde{A}}}{\alpha}{}\cap\bigcup_i\leftidx{^{\tilde{A}}}{\alpha_i}{}=\bigcup_i(\leftidx{^{\tilde{A}}}{\alpha}{}\cap\leftidx{^{\tilde{A}}}{\alpha_i}{})$. So,
 $R_L(\leftidx{^{\tilde{A}}}{\alpha}{}\cap\bigcup_i\leftidx{^{\tilde{A}}}{\alpha_i}{},\bigcup_i(\leftidx{^{\tilde{A}}}{\alpha}{}\cap\leftidx{^{\tilde{A}}}{\alpha_i}{}))=inf_x\{(\leftidx{^{\tilde{A}}}{\alpha}{}\cap\bigcup_i\leftidx{^{\tilde{A}}}{\alpha_i}{})(x)\rightarrow(\bigcup_i(\leftidx{^{\tilde{A}}}{\alpha}{}\cap\leftidx{^{\tilde{A}}}{\alpha_i}{}))(x)=1_L$.
 \end{enumerate}
 Hence $(\{\leftidx{^{\tilde{A}}}{\alpha}{}\mid \alpha \in L\},\leftidx{^{\tilde{A}}}{0_L}{}, \cap, \bigcup,R_L)$ is a localic frame.
\end{proof}
\begin{corollary}
$(\{\leftidx{^{\tilde{A}}}{\alpha}{}\mid \alpha \in [0,1]\},\leftidx{^{\tilde{A}}}{0}{}, \cap, \bigcup,R_{[0,1]})$ is a graded frame, where `$\rightarrow$' is the G$\ddot{o}$del arrow and $R_{[0,1]}(\leftidx{^{\tilde{A}}}{\alpha_1}{}, \leftidx{^{\tilde{A}}}{\alpha_2}{})=inf_x\{\leftidx{^{\tilde{A}}}{\alpha_1}{}(x)\rightarrow \leftidx{^{\tilde{A}}}{\alpha_2}{}(x)\}$ for $\alpha_1$, $\alpha_2\in [0,1]$.
\end{corollary}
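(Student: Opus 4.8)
The plan is to derive this as an immediate specialization of the theorem of subsection~\ref{TH}, taking $L=[0,1]$. First I would verify that $[0,1]$ with its usual order meets the hypothesis there, i.e.\ that it is a semilinear frame. That $[0,1]$ is a frame is standard: it is a complete lattice in which binary meet distributes over arbitrary joins. Semilinearity then follows because a totally ordered set is prelinear for the G\"odel-like arrow --- whenever $l_1\le l_2$ one has $l_1\rightarrow l_2=1$, so $(l_1\rightarrow l_2)\vee(l_2\rightarrow l_1)=\top$ in every case --- and prelinearity implies semilinearity by Property~\ref{prop5'}; alternatively one may quote Corollary~\ref{prop5} directly since $[0,1]$ is totally ordered.

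Next I would note that on $[0,1]$ the G\"odel-like arrow of Definition~\ref{gga} is literally the G\"odel arrow of the preceding definition: in a linear order the clause ``otherwise'' coincides with ``$a>b$'', so the two piecewise definitions agree. Consequently the relation $R_{[0,1]}(\leftidx{^{\tilde{A}}}{\alpha_1}{},\leftidx{^{\tilde{A}}}{\alpha_2}{})=inf_x\{\leftidx{^{\tilde{A}}}{\alpha_1}{}(x)\rightarrow\leftidx{^{\tilde{A}}}{\alpha_2}{}(x)\}$ in the statement is exactly the instance of the relation $R_L$ of that theorem with $L=[0,1]$. Applying the theorem therefore gives that $(\{\leftidx{^{\tilde{A}}}{\alpha}{}\mid\alpha\in[0,1]\},\leftidx{^{\tilde{A}}}{0}{},\cap,\bigcup,R_{[0,1]})$ is a localic frame whose fuzzy relation takes values in $[0,1]$.

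Finally I would invoke the observation recorded just after Definition~\ref{gfl}: a localic frame of the form $(A,\top,\wedge,\bigvee,R_{[0,1]})$ is, by definition, a graded frame in the sense of \cite{MP3}. This identification closes the argument. There is no genuinely hard step here --- the corollary is a true specialization --- and the only points needing a moment's care are the verification that $[0,1]$ is semilinear (so that the hypothesis of the main theorem actually applies) and the routine bookkeeping that each of the nine conditions of Definition~\ref{gfl}, read off at $L=[0,1]$, is precisely the corresponding axiom of a graded frame.
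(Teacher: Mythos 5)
Your proposal is correct and follows exactly the route the paper intends: the corollary is the specialization of the main theorem of subsection~\ref{TH} to $L=[0,1]$, using that a totally ordered frame is prelinear and hence semilinear (Property~\ref{prop5'}, or Corollary~\ref{prop5}), that the G\"odel-like arrow reduces to the G\"odel arrow on a chain, and that a localic frame with $[0,1]$-valued relation is by the remark after Definition~\ref{gfl} a graded frame. No gaps.
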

It may be noted that the 5-tuple $(\{\leftidx{^{\tilde{A}}}{\alpha}{}\mid \alpha \in L\},\leftidx{^{\tilde{A}}}{0_L}{}, \cap, \bigcup,R_L)$ is a localic preordered set, where $L$ is a  frame, $R_L(\leftidx{^{\tilde{A}}}{\alpha_1}{}, \leftidx{^{\tilde{A}}}{\alpha_2}{})=inf_x\{\leftidx{^{\tilde{A}}}{\alpha_1}{}(x)\rightarrow \leftidx{^{\tilde{A}}}{\alpha_2}{}(x)\}$ for $\alpha_1$, $\alpha_2\in L$ and `$\rightarrow$' is the G$\ddot{o}$del-like arrow, as it satisfies all the properties to be a localic frame except the property namely $$R_L(\leftidx{^{\tilde{A}}}{\alpha_1}{},\leftidx{^{\tilde{A}}}{\alpha_2}{})\wedge R_L(\leftidx{^{\tilde{A}}}{\alpha_1}{},\leftidx{^{\tilde{A}}}{\alpha_3}{})=R_L(\leftidx{^{\tilde{A}}}{\alpha_1}{},\leftidx{^{\tilde{A}}}{\alpha_2}{}\cap \leftidx{^{\tilde{A}}}{\alpha_3}{}).$$
 \section{Some applications of fuzzy $\alpha$-cut}
 In this section we shall show some usages of the notion of fuzzy $\alpha$-cuts.
\subsection{Topological Structure}
Usually a fuzzy topological space is defined as a crisp set having fuzzy open sets \cite{CL, RL}. In 1992, Chakraborty and Ahsanullah proposed a notion of fuzzy topology on fuzzy sets \cite{MA}. This generalisation allows for defining topological subspaces on fuzzy subsets of the original fuzzy topological space. With respect to the classical definition subspaces have to be defined on crisp subsets of the original set. In our recent work on fuzzy topological systems \cite{MP2} we needed to use $\mathscr{L}$-topological spaces where the value set $L$ is a frame. This is one further step towards generalisation of \cite{MA}. To make this paper self contained we give the definition below. 
\begin{definition}[$\mathscr{L}$-Topological Space]\cite{MA}\label{ltop}
 Let $(X,\tilde{A})$ be an $L$-fuzzy set and $\tau$ a collection of fuzzy subsets of $(X, \tilde{A})$ such that
 \begin{enumerate}
\item $(X,\tilde{\emptyset})$ and $(X,\tilde{A})$ are in $\tau$, where $\tilde{\emptyset} : X \longrightarrow L$ is such that $\tilde{\emptyset}(x) = 0_L$, for all $x \in X$, where $0_L$ is the least element of the frame $L$;
\item $(X,\tilde{A_1})$, $(X,\tilde{A_2})$ are in $\tau$ implies $(X,\tilde{A_1} \cap\tilde{A_2})$ is in $\tau$, where $(\tilde{A_1} \cap\tilde{A_2})(x)$ = $\tilde{A_1}(x) \wedge \tilde{A_2}(x)$, for all $x \in X$;
\item $(X,\tilde{A_i})\in\tau$ implies $(X,\bigvee_{i\in I}\tilde{A_i}) \in \tau$, where $\bigvee_{i\in I}\tilde{A_i} : X \longrightarrow L$ is such that $(\bigcup_{i\in I} \tilde{A_i})(x) = \bigvee_{i\in I} \tilde{A_i}(x)$, for all $x \in X$.
 \end{enumerate}
Then $(X, \tilde{A}, \tau)$ is an $\mathscr{L}$-topological space.
 \end{definition}
 One can easily see that the fuzzy $\alpha$-cuts of a fuzzy topological space are fuzzy topological subspaces. More specifically, we have.
 \begin{theorem}\cite{MP2}
 Let $(X,\tilde{A},\tau)$ be an $\mathscr{L}$-topological space and a fuzzy $\alpha$-cut of $(X,\tilde{A})$ i.e., $(X,\leftidx{^{\tilde{A}}}{\alpha}{})$ be taken. Let $\tau'$ be defined by $\tau' = \{(X,\tilde{T'}) \mid \tilde{T'} =\leftidx{^{\tilde{A}}}{\alpha}{} \cap\tilde{T},\tilde{T}\in\tau\}$. Then $(X,\leftidx{^{\tilde{A}}}{\alpha}{},\tau')$ also forms an $\mathscr{L}$-topological space and is an $\mathscr{L}$-topological subspace.
\end{theorem}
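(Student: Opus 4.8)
The plan is to verify the three axioms of an $\mathscr{L}$-topological space (Definition~\ref{ltop}) directly for the candidate $(X,\leftidx{^{\tilde{A}}}{\alpha}{},\tau')$, where $\tau' = \{(X,\tilde{T'}) \mid \tilde{T'} = \leftidx{^{\tilde{A}}}{\alpha}{}\cap\tilde{T},\ \tilde{T}\in\tau\}$, and then check the compatibility condition that makes it a subspace. First I would observe that $\leftidx{^{\tilde{A}}}{\alpha}{}$ is itself a fuzzy subset of $(X,\tilde{A})$ (since $\leftidx{^{\tilde{A}}}{\alpha}{}(x)\leq\tilde{A}(x)$ pointwise, by Definition~\ref{2'}), so the ambient fuzzy set on which $\tau'$ lives is legitimate.

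For axiom~1, I would note that $\tilde{\emptyset}\in\tau$ gives $\leftidx{^{\tilde{A}}}{\alpha}{}\cap\tilde{\emptyset} = \tilde{\emptyset}\in\tau'$, and $\tilde{A}\in\tau$ gives $\leftidx{^{\tilde{A}}}{\alpha}{}\cap\tilde{A} = \leftidx{^{\tilde{A}}}{\alpha}{}\in\tau'$ (using $\leftidx{^{\tilde{A}}}{\alpha}{}\subseteq\tilde{A}$), so both the empty set and the total set $\leftidx{^{\tilde{A}}}{\alpha}{}$ are present. For axiom~2, given $\tilde{T_1'} = \leftidx{^{\tilde{A}}}{\alpha}{}\cap\tilde{T_1}$ and $\tilde{T_2'} = \leftidx{^{\tilde{A}}}{\alpha}{}\cap\tilde{T_2}$ with $\tilde{T_1},\tilde{T_2}\in\tau$, idempotence and commutativity of $\cap$ give $\tilde{T_1'}\cap\tilde{T_2'} = \leftidx{^{\tilde{A}}}{\alpha}{}\cap(\tilde{T_1}\cap\tilde{T_2})$, and $\tilde{T_1}\cap\tilde{T_2}\in\tau$ by closure of $\tau$ under finite meets, so this lies in $\tau'$. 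For axiom~3, if $\tilde{T_i'} = \leftidx{^{\tilde{A}}}{\alpha}{}\cap\tilde{T_i}$ with $\tilde{T_i}\in\tau$, then $\bigcup_i\tilde{T_i'} = \leftidx{^{\tilde{A}}}{\alpha}{}\cap\bigcup_i\tilde{T_i}$ by the frame distributivity of $L$ (binary meet over arbitrary join, Definition~\ref{fm}), applied pointwise exactly as in the proof of Theorem~\ref{theorem4}; since $\bigcup_i\tilde{T_i}\in\tau$, the union is in $\tau'$.

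Finally, to see that $(X,\leftidx{^{\tilde{A}}}{\alpha}{},\tau')$ is a \emph{subspace} of $(X,\tilde{A},\tau)$, I would point out that $\tau'$ is by construction exactly the collection of traces $\leftidx{^{\tilde{A}}}{\alpha}{}\cap\tilde{T}$ of open sets $\tilde{T}\in\tau$ on the fuzzy subset $\leftidx{^{\tilde{A}}}{\alpha}{}$, which is precisely the defining property of the induced (relative) topology in the sense of \cite{MA}. No obstacle is really expected here: the only point requiring a little care is the distributivity step in axiom~3, where one must pass from a finite-meet argument to an arbitrary-join argument, and this is already handled by the frame condition on $L$ together with the pointwise computation used in Theorem~\ref{theorem4}. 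The idempotence identity $\leftidx{^{\tilde{A}}}{\alpha}{}\cap\leftidx{^{\tilde{A}}}{\alpha}{} = \leftidx{^{\tilde{A}}}{\alpha}{}$, used silently in axioms~2 and~3, follows immediately from $a\wedge a = a$ in the lattice $L$.
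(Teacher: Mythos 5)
Your proof is correct; the paper itself gives no proof of this theorem (it simply cites \cite{MP2} and remarks that ``one can easily see'' the result), and your direct verification of the three axioms of Definition~\ref{ltop} --- with $\leftidx{^{\tilde{A}}}{\alpha}{}\cap\tilde{A}=\leftidx{^{\tilde{A}}}{\alpha}{}$ for axiom~1, idempotence of $\cap$ for axiom~2, and pointwise frame distributivity for axiom~3 --- is exactly the standard argument the authors are alluding to. Nothing is missing.
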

Hence fuzzy $\alpha$-cuts provide us with a natural class of fuzzy substructure of $\mathscr{L}$-topological space.

It should be noted that in subsequent years there has been a lot of serious work on fuzzy topological spaces from the angle of category theory \cite{MA, MB1, UH, RL}. But to our knowledge, the notion of fuzzy $\alpha$-cuts as substructures has not been discussed. It would be interesting to investigate what kind of sub objects these fuzzy $\alpha$-cuts give rise to.
\subsection{Algebraic Structure}
A similar approach was initially adopted in developing fuzzy algebraic structures. Rosenfeld's pioneering work in fuzzy groups starts with an ordinary group and proceeds to define fuzzy subgroups of that group. On the other hand in \cite{MB}, Chakraborty and Banerjee defined fuzzy operations on fuzzy sets thus obtaining a generalisation that was intended. They, however, placed their work in categorical framework. We shall adopt their idea basically but avoiding categorical language and then show the role of fuzzy $\alpha$-cuts in this context. It is to be noted that a binary operation on a crisp set $A$ (e.g. the group operation) is a mapping from $A\times A$ to $A$. We shall define a fuzzy binary operation on a fuzzy set $(X,\tilde{A})$ using fuzzy equality. It is also to be noted that the Cartesian product of two $L$-fuzzy sets $(X,\tilde{A})$ and $(Y,\tilde{B})$ is the $L$-fuzzy set $(X\times Y,\tilde{A}\times \tilde{B})$ where $(\tilde{A}\times\tilde{B})(x,y)=\tilde{A}(x)\wedge\tilde{B}(y)$. So a fuzzy binary composition on $(X,\tilde{A})$ has to be a kind of mapping from $(X\times X,\tilde{A}\times\tilde{A})$ to $(X,\tilde{A})$ where the pre-image is mapped to the image to some grade belonging to the frame $L$. For any $L$-fuzzy set $(X,\tilde{A})$ by $\mid\tilde{A}\mid$ is meant the support viz. $\{x\in X\mid\tilde{A}(x)>0_L\}$. Instead of using pre-fix notation for the operator $\oplus$ we shall use infix notation, i.e. we write 
$$x_1\oplus x_2=x_3,\ \text{for}\ \oplus(x_1,x_2)=x_3$$
and additionally equality relation $(=)$ is graded. That is, for any $x\in X$, $y\in Y$, the expression $x=y$ gets a degree from $L$. 
We shall write $gr(x\simeq y)$ to make a distinction between fuzzy equality and ordinary equality. Formally, we have Definition \ref{fbo}.
\begin{definition}[$L$-fuzzy binary operation]\label{fbo}
An $L$-fuzzy binary operation $\oplus:(X\times X,\tilde{A}\times\tilde{A})\longrightarrow (X,\tilde{A})$ is a map such that
\begin{enumerate}
\item $gr(x_1\oplus x_2\simeq x_3)\leq \tilde{A}\times \tilde{A}(x_1,x_2)\wedge\tilde{A}(x_3)$ for any $x_1,x_2,x_3\in X$, where $gr:(X\times X)\times X\longrightarrow L$ and $\tilde{A}\times\tilde{A}(x_1,x_2)=\tilde{A}(x_1)\wedge\tilde{A}(x_2)$;
\item for any $(a_1,a_2)\in\mid\tilde{A}\times\tilde{A}\mid$ there exist a unique $a\in\mid\tilde{A}\mid$ with $gr(a_1\oplus a_2\simeq a)=\tilde{A}(a_1)\wedge\tilde{A}(a_2)$ and $gr(a_1\oplus a_2\simeq a')=0_L$, if $a'(\neq a)\in \mid\tilde{A}\mid$.
\end{enumerate}
\end{definition}
Note that condition 1 is the fuzzy counterpart of the closure property of $\tilde{A}$ relative to the operation $\oplus$. Also $gr(x_1\oplus x_2\simeq x_3)$ represents the fuzzy equality that is, the grade in which the pair $(x_1,x_2)$ equals to $x_3$ by the fuzzy composition $\oplus$. That is, here we will talk about the degree of equality between $x_1\oplus x_2$ and $x_3$ of $X$. 
\begin{definition}[$L$-fuzzy group]
An $L$-fuzzy group is a triple $(X,\tilde{A},\oplus)$ consisting of an $L$-fuzzy set $(X,\tilde{A})$ with $\mid\tilde{A}\mid\neq\emptyset$ and an $L$-fuzzy binary operation $\oplus$ such that
\begin{enumerate}
\item if for any $a,b,a_1,a_2,a_3,b_1,b_2\in \mid\tilde{A}\mid$,
\begin{align*}
gr(a_1\oplus a_2\simeq b_1) & = \tilde{A}(a_1)\wedge \tilde{A}(a_2),\\
gr(b_1\oplus a_3\simeq a) & = \tilde{A}(b_1)\wedge \tilde{A}(a_3),\\
gr(a_2\oplus a_3\simeq b_2) & = \tilde{A}(a_2)\wedge \tilde{A}(a_3) \ and\\
gr(a_1\oplus b_2\simeq b) & = \tilde{A}(a_1)\wedge \tilde{A}(b_2)\\
\end{align*}
then $a=b$;
\item there exist $e\in\mid\tilde{A}\mid$ such that $gr(a\oplus e\simeq a)=\tilde{A}(a)\wedge\tilde{A}(e)=gr(e\oplus a\simeq a)$ for any $a\in \mid\tilde{A}\mid$ and
\item for any $a\in \mid\tilde{A}\mid$, there exist $a^{-1}\in \mid\tilde{A}\mid$, such that $gr(a\oplus a^{-1}\simeq e)=\tilde{A}(a)\wedge\tilde{A}(a^{-1})=gr(a^{-1}\oplus a\simeq e)$ and $\tilde{A}(a)=\tilde{A}(a^{-1})$.
\end{enumerate}
$e\in\mid\tilde{A}\mid$ described in 2 is known as the identity whereas $a^{-1}\in \mid\tilde{A}\mid$ for each $a\in \mid\tilde{A}\mid$ illustrated in condition 3 are known as inverse of $a$ in the $L$-fuzzy group $(X,\tilde{A},\oplus)$. It is possible to show that identity and inverse of an element in the $L$-fuzzy group are unique.

It is to be noted that property 1 of being an $L$-fuzzy group represents the fuzzy version of associativity.
\end{definition}
Equipped with this definition of an $L$-fuzzy group, it will be observed how does fuzzy $\alpha$-cuts play a role.
\begin{definition}[$L$-fuzzy subgroup]
Let $(X,\tilde{A},\oplus)$ be an $L$-fuzzy group and $(X,\tilde{B})$ be a $L$-fuzzy subset of the $L$-fuzzy set $(X,\tilde{A})$. Then the fuzzy substructure $(X,\tilde{B},\oplus')$ where $\oplus'$ is an $L$-fuzzy binary operation on $(X,\tilde{B})$ defined by, $$gr(x_1\oplus'x_2\simeq x_3)=gr(x_1\oplus x_2\simeq x_3)\wedge\tilde{B}(x_1)\wedge\tilde{B}(x_2)\wedge\tilde{B}(x_3),$$ for any $x_1,x_2,x_3\in X$
is called a fuzzy subgroup of $(X,\tilde{A},\oplus)$ if $(X,\tilde{B},\oplus')$ is itself an $L$-fuzzy group, [$\oplus'$ is the restriction of $\oplus$ on the fuzzy subset $(X,\tilde{B})$ of $(X,\tilde{A})$].
\end{definition}

\begin{proposition}\label{iden}
Let $(X,\tilde{A},\oplus)$ be an $L$-fuzzy group. Then for any $a\in \mid\tilde{A}\mid$, $\tilde{A}(a)\leq \tilde{A}(e)$.
\end{proposition}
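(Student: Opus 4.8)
The plan is to exploit the identity axiom for the $L$-fuzzy group together with the closure condition (condition 1 of Definition \ref{fbo}) applied to the element $a$ and the identity $e$. First I would invoke condition 2 of the $L$-fuzzy group definition: there exists $e\in\mid\tilde{A}\mid$ with $gr(a\oplus e\simeq a)=\tilde{A}(a)\wedge\tilde{A}(e)$ for every $a\in\mid\tilde{A}\mid$. This gives us a concrete value for the grade $gr(a\oplus e\simeq a)$, namely $\tilde{A}(a)\wedge\tilde{A}(e)$.

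Next I would bring in condition 1 of Definition \ref{fbo} (the fuzzy closure property), which says $gr(x_1\oplus x_2\simeq x_3)\leq \tilde{A}\times\tilde{A}(x_1,x_2)\wedge\tilde{A}(x_3)$. Specializing to $x_1=a$, $x_2=e$, $x_3=a$, this yields $gr(a\oplus e\simeq a)\leq \tilde{A}(a)\wedge\tilde{A}(e)\wedge\tilde{A}(a)=\tilde{A}(a)\wedge\tilde{A}(e)$, which is consistent but not yet sharp enough. The trick is rather to use $x_1 = e$, $x_2 = e$, $x_3 = e$: by condition 2 we have $gr(e\oplus e\simeq e)=\tilde{A}(e)\wedge\tilde{A}(e)=\tilde{A}(e)$, and this is automatically $\le \tilde A(e)$. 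That alone does not compare $\tilde A(a)$ with $\tilde A(e)$ either. Instead I would reconsider: apply condition 2 with the \emph{first} argument being $a$ and second being $e$ to get $gr(a\oplus e\simeq a)=\tilde A(a)\wedge \tilde A(e)$, and separately recall that by the uniqueness clause (condition 2 of Definition \ref{fbo}) the element $a$ is the unique element of $\mid\tilde A\mid$ receiving a nonzero grade from the pair $(a,e)$, with that grade equal to $\tilde A(a)\wedge\tilde A(e)$. Now the key observation: if $\tilde A(a)\wedge\tilde A(e)<\tilde A(a)$, i.e. $\tilde A(e)<\tilde A(a)$, then $gr(a\oplus e\simeq a)=\tilde A(e)$, yet condition 1 forces no contradiction directly — so the real leverage must come from the inverse axiom. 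Using condition 3, there is $a^{-1}\in\mid\tilde A\mid$ with $\tilde A(a^{-1})=\tilde A(a)$ and $gr(a\oplus a^{-1}\simeq e)=\tilde A(a)\wedge\tilde A(a^{-1})=\tilde A(a)$; then condition 1 of Definition \ref{fbo} applied to $x_1=a$, $x_2=a^{-1}$, $x_3=e$ gives $\tilde A(a)=gr(a\oplus a^{-1}\simeq e)\leq \tilde A(a)\wedge\tilde A(a^{-1})\wedge\tilde A(e)\leq\tilde A(e)$, which is exactly the claim.

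So the clean argument is: fix $a\in\mid\tilde{A}\mid$; by condition 3 of the $L$-fuzzy group definition choose an inverse $a^{-1}$ with $\tilde{A}(a^{-1})=\tilde{A}(a)$ and $gr(a\oplus a^{-1}\simeq e)=\tilde{A}(a)\wedge\tilde{A}(a^{-1})=\tilde{A}(a)$; then apply the fuzzy closure inequality (condition 1 of Definition \ref{fbo}) with $x_1=a$, $x_2=a^{-1}$, $x_3=e$ to obtain $\tilde{A}(a)=gr(a\oplus a^{-1}\simeq e)\leq\tilde{A}(a)\wedge\tilde{A}(a^{-1})\wedge\tilde{A}(e)$, and in particular $\tilde{A}(a)\leq\tilde{A}(e)$. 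I expect the main (minor) obstacle to be simply selecting the right instance of the closure axiom: the naive choice $(a,e)\mapsto a$ does not separate $\tilde A(a)$ from $\tilde A(e)$, whereas routing through the inverse and the identity-valued product $a\oplus a^{-1}\simeq e$ does. Everything else is a one-line substitution, so the proof is short once that instance is identified.
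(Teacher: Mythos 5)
Your final argument is correct and is essentially identical to the paper's own proof: both use condition 3 of the $L$-fuzzy group definition to obtain $gr(a\oplus a^{-1}\simeq e)=\tilde{A}(a)\wedge\tilde{A}(a^{-1})=\tilde{A}(a)$ and then bound this grade by $\tilde{A}(e)$ via the closure condition of the fuzzy binary operation. The exploratory detours in your write-up are harmless; the clean version at the end matches the paper.
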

\begin{proof}
For any $a\in\mid\tilde{A}\mid$, there exist $a^{-1}\in\mid\tilde{A}\mid$ such that $gr(a\oplus a^{-1}\simeq e)=\tilde{A}(a)\wedge \tilde{A}(a^{-1})=\tilde{A}(a)$, as $\tilde{A}(a)=\tilde{A}(a^{-1})$. Also we know $gr(a\oplus a^{-1}\simeq e)\leq \tilde{A}(e)$ and consequently $\tilde{A}(a)\leq\tilde{A}(e)$, for any $a\in\mid\tilde{A}\mid$.
\end{proof}

\begin{theorem}
Let $(X,\tilde{A},\oplus)$ be an $L$-fuzzy group. Then  $(X,\leftidx{^{\tilde{A}}}{\alpha}{},\oplus')$ is an $L$-fuzzy sub group for any $\alpha\in L$ such that $\mid \leftidx{^{\tilde{A}}}{\alpha}{}\mid\neq\emptyset$.
\end{theorem}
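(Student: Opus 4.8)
The plan is to verify directly that the triple $(X,\leftidx{^{\tilde{A}}}{\alpha}{},\oplus')$ satisfies all the clauses in the definition of an $L$-fuzzy group, where $\oplus'$ is the restriction described in the definition of $L$-fuzzy subgroup. First I would check that $\oplus'$ is indeed an $L$-fuzzy binary operation on $(X,\leftidx{^{\tilde{A}}}{\alpha}{})$: condition~1 follows since $gr(x_1\oplus' x_2\simeq x_3)\le gr(x_1\oplus x_2\simeq x_3)\le \tilde A(x_1)\wedge\tilde A(x_2)\wedge\tilde A(x_3)$ and, by the defining formula, $gr(x_1\oplus' x_2\simeq x_3)\le \leftidx{^{\tilde A}}{\alpha}{}(x_1)\wedge\leftidx{^{\tilde A}}{\alpha}{}(x_2)\wedge\leftidx{^{\tilde A}}{\alpha}{}(x_3)$, which is exactly the required inequality; for condition~2, for $(a_1,a_2)\in|\leftidx{^{\tilde A}}{\alpha}{}\times\leftidx{^{\tilde A}}{\alpha}{}|$ one takes the unique $a\in|\tilde A|$ supplied by $\oplus$ and argues that $a$ in fact lies in $|\leftidx{^{\tilde A}}{\alpha}{}|$ (see the key obstacle below), after which $gr(a_1\oplus' a_2\simeq a)=\tilde A(a_1)\wedge\tilde A(a_2)\wedge\leftidx{^{\tilde A}}{\alpha}{}(a)=\leftidx{^{\tilde A}}{\alpha}{}(a_1)\wedge\leftidx{^{\tilde A}}{\alpha}{}(a_2)$ and the vanishing clause is inherited verbatim.

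The crux — and the step I expect to be the main obstacle — is showing that $|\leftidx{^{\tilde A}}{\alpha}{}|$ is closed under the operation in the required sense, i.e. that the product $a$ of two elements $a_1,a_2$ with $\tilde A(a_1),\tilde A(a_2)\ge\alpha$ again satisfies $\tilde A(a)\ge\alpha$, and likewise that the identity $e$ and the inverses $a^{-1}$ land in $|\leftidx{^{\tilde A}}{\alpha}{}|$. For the identity, Proposition~\ref{iden} gives $\tilde A(a)\le\tilde A(e)$ for every $a\in|\tilde A|$, so if $|\leftidx{^{\tilde A}}{\alpha}{}|\neq\emptyset$ then picking any $a_0$ in it yields $\tilde A(e)\ge\tilde A(a_0)\ge\alpha$, hence $e\in|\leftidx{^{\tilde A}}{\alpha}{}|$ and $\leftidx{^{\tilde A}}{\alpha}{}(e)=\tilde A(e)$. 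For inverses, $\tilde A(a^{-1})=\tilde A(a)\ge\alpha$ directly, so $a^{-1}\in|\leftidx{^{\tilde A}}{\alpha}{}|$. For the product, I would use clause~2 of the $L$-fuzzy binary operation together with clause~1 of the $L$-fuzzy group: from $gr(a_1\oplus a_2\simeq a)=\tilde A(a_1)\wedge\tilde A(a_2)\ge\alpha$ and the closure inequality $gr(a_1\oplus a_2\simeq a)\le\tilde A(a)$ we get $\tilde A(a)\ge\alpha$, so $a\in|\leftidx{^{\tilde A}}{\alpha}{}|$. This is the key computation that makes everything else go through.

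With closure in hand, the remaining group axioms for $(X,\leftidx{^{\tilde A}}{\alpha}{},\oplus')$ are routine. Associativity (clause~1): given $a_i,b_i\in|\leftidx{^{\tilde A}}{\alpha}{}|\subseteq|\tilde A|$ with the four grade equalities holding for $\oplus'$, the closure argument shows $b_1,b_2$ (the intermediate products) lie in $|\leftidx{^{\tilde A}}{\alpha}{}|$, so on these elements $gr(\cdot\oplus'\cdot\simeq\cdot)$ agrees with $gr(\cdot\oplus\cdot\simeq\cdot)$; hence the four equalities hold for $\oplus$ as well and associativity of $(X,\tilde A,\oplus)$ forces $a=b$. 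The identity axiom (clause~2): with $e\in|\leftidx{^{\tilde A}}{\alpha}{}|$ as above, for any $a\in|\leftidx{^{\tilde A}}{\alpha}{}|$ we have $gr(a\oplus' e\simeq a)=gr(a\oplus e\simeq a)\wedge\leftidx{^{\tilde A}}{\alpha}{}(a)\wedge\leftidx{^{\tilde A}}{\alpha}{}(e)=\tilde A(a)\wedge\tilde A(e)\wedge\tilde A(a)\wedge\tilde A(e)=\leftidx{^{\tilde A}}{\alpha}{}(a)\wedge\leftidx{^{\tilde A}}{\alpha}{}(e)$, and symmetrically for $e\oplus' a$. The inverse axiom (clause~3): with $a^{-1}\in|\leftidx{^{\tilde A}}{\alpha}{}|$ as above, the same substitution gives $gr(a\oplus' a^{-1}\simeq e)=\tilde A(a)\wedge\tilde A(a^{-1})=\leftidx{^{\tilde A}}{\alpha}{}(a)\wedge\leftidx{^{\tilde A}}{\alpha}{}(a^{-1})$, symmetrically for $a^{-1}\oplus' a$, and $\leftidx{^{\tilde A}}{\alpha}{}(a)=\tilde A(a)=\tilde A(a^{-1})=\leftidx{^{\tilde A}}{\alpha}{}(a^{-1})$. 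Since $\oplus'$ is by construction the restriction of $\oplus$ to $(X,\leftidx{^{\tilde A}}{\alpha}{})$, this establishes that $(X,\leftidx{^{\tilde A}}{\alpha}{},\oplus')$ is an $L$-fuzzy subgroup of $(X,\tilde A,\oplus)$.
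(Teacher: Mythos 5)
Your proposal is correct and follows essentially the same route as the paper's proof: verify that $\oplus'$ is an $L$-fuzzy binary operation, establish closure of $\mid\leftidx{^{\tilde{A}}}{\alpha}{}\mid$ under the product via $gr(a_1\oplus a_2\simeq a)=\tilde A(a_1)\wedge\tilde A(a_2)\geq\alpha$ together with $gr(a_1\oplus a_2\simeq a)\leq\tilde A(a)$, get $e\in\mid\leftidx{^{\tilde{A}}}{\alpha}{}\mid$ from Proposition~\ref{iden}, and get $a^{-1}\in\mid\leftidx{^{\tilde{A}}}{\alpha}{}\mid$ from $\tilde A(a^{-1})=\tilde A(a)$. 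The key computation you flag as the main obstacle is exactly the one the paper carries out.
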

\begin{proof}
Given that $(X,\tilde{A},\oplus)$ is an $L$-fuzzy group. Let us restrict the function $\oplus$ on $\leftidx{^{\tilde{A}}}{\alpha}{}$. Then for $x,y,z\in X$, if $gr(x\oplus y\simeq z)=\tilde{A}(x)\wedge \tilde{A}(y)$ then 
$$gr(x\oplus' y\simeq z)=\tilde{A}(x)\wedge\tilde{A}(y)\wedge \leftidx{^{\tilde{A}}}{\alpha}{}(x)\wedge \leftidx{^{\tilde{A}}}{\alpha}{}(y)\wedge \leftidx{^{\tilde{A}}}{\alpha}{}(z).$$
Now as $\tilde{A}(x)\wedge\tilde{A}(y)\leq\tilde{A}(z)$, we have $\leftidx{^{\tilde{A}}}{\alpha}{}(x)\wedge\leftidx{^{\tilde{A}}}{\alpha}{}(y)\leq\leftidx{^{\tilde{A}}}{\alpha}{}(z).$ Therefore $\leftidx{^{\tilde{A}}}{\alpha}{}(x)\wedge\leftidx{^{\tilde{A}}}{\alpha}{}(y)\wedge\leftidx{^{\tilde{A}}}{\alpha}{}(z)=\leftidx{^{\tilde{A}}}{\alpha}{}(x)\wedge\leftidx{^{\tilde{A}}}{\alpha}{}(y)$. Hence for any $x,y,z\in X$ if $gr(x\oplus y\simeq z)=\tilde{A}(x)\wedge\tilde{A}(y)$ then $gr(x\oplus' y\simeq z)=\leftidx{^{\tilde{A}}}{\alpha}{}(x)\wedge\leftidx{^{\tilde{A}}}{\alpha}{}(y)$.

To show that $(X,\leftidx{^{\tilde{A}}}{\alpha}{},\oplus')$ is an $L$-fuzzy subgroup for $\alpha$ such that $\mid\leftidx{^{\tilde{A}}}{\alpha}{}\mid\neq\emptyset$ first of all notice that $\oplus'$ is indeed an $L$-fuzzy binary operation as the following holds.

(i) $gr(x\oplus'y\simeq z)=gr(x\oplus y\simeq z)\wedge\leftidx{^{\tilde{A}}}{\alpha}{}(x)\wedge\leftidx{^{\tilde{A}}}{\alpha}{}(y)\wedge\leftidx{^{\tilde{A}}}{\alpha}{}(z)\leq(\leftidx{^{\tilde{A}}}{\alpha}{}(x)\wedge\leftidx{^{\tilde{A}}}{\alpha}{}(y))\wedge\leftidx{^{\tilde{A}}}{\alpha}{}(z)$.

(ii) If $a,b\in\mid\leftidx{^{\tilde{A}}}{\alpha}{}\mid$ then $\leftidx{^{\tilde{A}}}{\alpha}{}(a)>0_L$, $\leftidx{^{\tilde{A}}}{\alpha}{}(b)>0_L$. Hence $\tilde{A}(a)\geq \alpha$ and $\tilde{A}(b)\geq\alpha$. As $\oplus$ is an $L$-fuzzy binary operation so for any $a,b\in\mid\leftidx{^{\tilde{A}}}{\alpha}{}\mid\subseteq\mid\tilde{A}\mid$ there exist unique $c\in\mid\tilde{A}\mid$ with $gr(a\oplus b\simeq c)=\tilde{A}(a)\wedge\tilde{A}(b)\leq\tilde{A}(c)$ and $gr(a\oplus b\simeq c')=0_L$, if $c'(\neq c)\in\mid\tilde{A}\mid$. Therefore $\tilde{A}(c)\geq \alpha$ and hence $\leftidx{^{\tilde{A}}}{\alpha}{}(c)=\tilde{A}(c)$. $c\in \mid\leftidx{^{\tilde{A}}}{\alpha}{}\mid$. Consequently for $(a,b)\in\mid\leftidx{^{\tilde{A}}}{\alpha}{}\times\leftidx{^{\tilde{A}}}{\alpha}{}\mid$, there exist unique $c\in\mid\leftidx{^{\tilde{A}}}{\alpha}{}\mid$ with $gr(a\oplus' b\simeq c)=\leftidx{^{\tilde{A}}}{\alpha}{}(a)\wedge\leftidx{^{\tilde{A}}}{\alpha}{}(b)$ and $gr(a\oplus' b\simeq c')=0_L$ if $c'(\neq c)\in\mid\leftidx{^{\tilde{A}}}{\alpha}{}\mid$.

As for any $x,y,z\in X$ if $gr(x\oplus y\simeq z)=\tilde{A}(x)\wedge\tilde{A}(y)$ then $gr(x\oplus' y\simeq z)=\leftidx{^{\tilde{A}}}{\alpha}{}(x)\wedge\leftidx{^{\tilde{A}}}{\alpha}{}(y)$, associativity holds good.

Now $(X,\tilde{A},\oplus)$ is an $L$-group and so there exist $e\in\mid\tilde{A}\mid$ such that for any $a\in \mid\tilde{A}\mid$, $gr(a\oplus e\simeq a)=\tilde{A}(a)\wedge\tilde{A}(e)$.
Hence for any $a\in\mid\leftidx{^{\tilde{A}}}{\alpha}{}\mid\subseteq\mid\tilde{A}\mid$, $gr(a\oplus' e\simeq a)=\leftidx{^{\tilde{A}}}{\alpha}{}(a)\wedge\leftidx{^{\tilde{A}}}{\alpha}{}(e)$. Now $a\in\mid \leftidx{^{\tilde{A}}}{\alpha}{}\mid$ implies $\leftidx{^{\tilde{A}}}{\alpha}{}(a)>0_L$ and hence $\tilde{A}(a)\geq \alpha$. Using Proposition \ref{iden}, we have $\alpha\leq\tilde{A}(a)\leq\tilde{A}(e)$. Hence $\leftidx{^{\tilde{A}}}{\alpha}{}(e)=\tilde{A}(e)$. So, $e\in\mid\leftidx{^{\tilde{A}}}{\alpha}{}\mid$.

Similarly it can be shown that for any $a\in\mid\leftidx{^{\tilde{A}}}{\alpha}{}\mid$, there exist $a^{-1}\in\mid\leftidx{^{\tilde{A}}}{\alpha}{}\mid$ such that $gr(a\oplus' a^{-1}\simeq e)=\leftidx{^{\tilde{A}}}{\alpha}{}(a)\wedge \leftidx{^{\tilde{A}}}{\alpha}{}(a^{-1})$, as $\tilde{A}(a^{-1})=\tilde{A}(a)\geq \alpha$.
\end{proof}
Thus fuzzy $\alpha$-cuts form natural fuzzy subgroups of the fuzzy group.

This method of defining fuzzy algebraic structures and their sub structures may be adopted for any kind of algebraic structure not necessarily fuzzy groups only.
\begin{example}
Let $X=\{x_1,x_2,x_3,x_4,x_5\}$, $\tilde{A}:X\longrightarrow L$ where
\begin{center}
\begin{tikzpicture}
  \matrix (galois)
     [matrix of nodes,%
      nodes in empty cells,
      nodes={outer sep=0pt,circle,minimum size=4pt},
      column sep={1cm,between origins},
      row sep={1cm,between origins}]
   {
    && $1_L$ &&\\
    &  & $l_4$ & \\
   & $l_1$ &  & $l_2$\\
    && $l_3$ &&\\
    && $0_L$ &&\\
   };
      \draw (galois-1-3) -- (galois-2-3);
      \draw (galois-2-3) -- (galois-3-2);
      \draw (galois-2-3) -- (galois-3-4);
       \draw (galois-3-2) -- (galois-4-3);
       \draw (galois-3-4) -- (galois-4-3);
        \draw (galois-4-3) -- (galois-5-3);

\end{tikzpicture} 
\end{center}
such that $\tilde{A}(x_i)=l_i$, for $i=\{1,2,3,4\}$ and $\tilde{A}(x_5)=0_L$. Here $\mid\tilde{A}\mid=\{x_1,x_2,x_3,x_4\}$. Let us define $\oplus$ as follows:
\begin{align*}
gr(x_1\oplus x_1\simeq x_4)=l_1 && gr(x_1\oplus x_2\simeq x_3)=l_3\\  
gr(x_1\oplus x_3\simeq x_2)=l_3 && gr(x_1\oplus x_4\simeq x_1)=l_1 \\
gr(x_2\oplus x_1\simeq x_3)=l_3 && gr(x_2\oplus x_2\simeq x_4)=l_2 \\
gr(x_2\oplus x_3\simeq x_1)=l_3 && gr(x_2\oplus x_4\simeq x_2)=l_2 \\
gr(x_3\oplus x_1\simeq x_2)=l_3 && gr(x_3\oplus x_2\simeq x_1)=l_3 \\
gr(x_3\oplus x_3\simeq x_4)=l_3 && gr(x_3\oplus x_4\simeq x_3)=l_3 \\
gr(x_4\oplus x_1\simeq x_1)=l_1 && gr(x_4\oplus x_2\simeq x_2)=l_2 \\
gr(x_4\oplus x_3\simeq x_3)=l_3 && gr(x_4\oplus x_4\simeq x_4)=l_4 \\
\end{align*}
Then clearly $x_4=e$, $x_1^{-1}=x_1$, $x_2^{-1}=x_2$, $x_3^{-1}=x_3$ and $x_4^{-1}=x_4$. Hence $(X,\tilde{A},\oplus)$ is an $L$-fuzzy group.

Let $\alpha=l_1$, then $\mid\leftidx{^{\tilde{A}}}{l_1}{}\mid=\{x_1,x_4\}\neq\emptyset$ and $(X,\leftidx{^{\tilde{A}}}{l_1}{},\oplus')$ forms an $L$-fuzzy group and consequently becomes an $L$-fuzzy subgroup of $(X,\tilde{A},\oplus)$. Similarly for other $\alpha\in L$, where $\mid\leftidx{^{\tilde{A}}}{\alpha}{}\mid\neq\emptyset$, it can be shown that $(X,\leftidx{^{\tilde{A}}}{\alpha}{},\oplus')$ forms $L$-fuzzy subgroups of $(X,\tilde{A},\oplus)$.
\end{example}
\subsection{Probabilistic Rough Set Theory}
We will now observe another kind of usefulness of the notion of fuzzy $\alpha$-cuts in the context of rough set theory \cite{PW, YY}. An approximation space is a tuple $(X,R)$, consisting of a set of objects $X$ and an equivalence relation $R$, known as indiscernibility relation on $X$. For any $A\subseteq X$, the lower and upper approximations of $A$ in the approximation space $(X,R)$ are denoted by $\underline{A}$ and $\overline{A}$ respectively and defined as follows.
$$\underline{A}=\bigcup\{[x]\mid [x]\subseteq A\};$$
$$\overline{A}=\bigcup\{[x]\mid A\cap [x]\neq\emptyset\}.$$
A rough membership function of $A$, denoted by $\mu_A$, is a function from $X$ to $[0,1]$ such that $\mu_A(x)=\frac{\mid [x]\cap A\mid}{\mid [x]\mid}\leq 1$, where $\mid S\mid$ stands for the cardinality of the set $S$ and $[x]$ stands for the equivalence class of $x\in X$. In this definition $X$ is taken to be a finite set.

In \cite{YY}, we notice that for generalised probabilistic approximations, they considered a pair of parameters $\alpha,\beta\in [0,1]$ with $\alpha\geq\beta$ to ensure that the lower approximation is smaller than the upper approximation in order to be consistent with existing approximation operators.

In the theory of probabilistic rough sets a weight or grade from the set $[0,1]$ is attached with each granule. The grades of granules are obtained with the help of some rough membership function. In particular the grade of granule may be determined with the help of above described rough membership function. Notice that in \cite{YY}, while defining lower and upper approximations of a set $A$, $\alpha$-cuts and strict $\beta$-cuts are used with $0\leq\beta<\alpha\leq 1$ in the following way.
$$\underline{A_\alpha}=\{x\in X\mid\mu_A(x)\geq \alpha\};$$ 
$$\overline{A_\beta}=\{x\in X\mid\mu_A(x)> \beta\}.$$
These are crisp sets. Hence the grade disappears in the final approximations.

But while defining lower and upper approximations of a set $A$, if we use the concept of fuzzy $\alpha$-cuts and fuzzy $\beta$-cuts instead of $\alpha$-cuts and strict $\beta$-cuts then we will able to end up with the final approximations having grades. That is, the lower and upper approximations becomes fuzzy sets and defined as follows.
$$\underline{\leftidx{^{A}}{\alpha}{}}:X\longrightarrow [0,1]\ \text{s.t.}\ \underline{\leftidx{^{A}}{\alpha}{}}(x)=
\begin{cases}
\mu_A(x) & \text{if}\ \mu_A(x)\geq \alpha\\
0 & \text{otherwise}.
\end{cases}$$
$$\overline{\leftidx{^{A}}{\beta}{}}:X\longrightarrow [0,1]\ \text{s.t.}\ \overline{\leftidx{^{A}}{\beta}{}}(x)=
\begin{cases}
\mu_A(x) & \text{if}\ \mu_A(x)\geq \beta\\
0 & \text{otherwise}.
\end{cases}$$
It is quite expected that the above described notion of lower and upper approximations will play a significant role in probabilistic rough set theory. Here instead of two different types of cuts viz. $\alpha$-cuts and strict $\beta$-cuts one type of cut has been used uniformly in determining lower and upper approximations. In this paper we will not delve into this topic, but it will be considered in our future research.
\section{Concluding Remarks}
In this paper we have dealt with the notion of fuzzy $\alpha$-cut and its significance. Study of the family of fuzzy $\alpha$-cuts provides an example of graded frame which was introduced in \cite{MP3}. Moreover in this paper we generalise the notion of graded frame one step further and call it `semilinear frame'. It is to emphasise that the notion semilinearity introduced in this paper is more general than `prelinearity'; while the latter notion has been widely discussed in literature, the former notion is not. We also proposed the notion of localic frame in this work. A detailed study of G$\ddot{o}$del-like arrow provides a nice result about the relation between prelinearity and semilinearity property. The algebraic notion of semilinear frame needs to be studied in more detail. Taking a general fuzzy arrow instead of G$\ddot{o}$del arrow may also be considered as an interesting future project.

\paragraph{Acknowledgements.} 
The corresponding author acknowledge Department of Science $\&$ Technology, Government of India for financial support vide reference no. SR/WOS-A/PM-1010/2014 under Women Scientist Scheme to carry out this work.

\section*{References}

\bibliography{mybibfile}

\end{document}